\newcommand{\diag}{\operatorname{diag}}
\def\ga{\mathfrak{a}}
\def\gg{\mathfrak{g}}
\def\gh{\mathfrak{h}}
\def\gk{\mathfrak{k}}
\def\gm{\mathfrak{m}}
\def\gp{\mathfrak{p}}
\def\gs{\mathfrak{s}}
\def\gt{\mathfrak{t}}
\def\gu{\mathfrak{u}}
\def\gv{\mathfrak{v}}
\def\gw{\mathfrak{w}}
\def\gz{\mathfrak{z}}
\def\C{\mathbb{C}}
\def\H{\mathbb{H}}
\def\R{\mathbb{R}}
\def\Z{\mathbb{Z}}
\def\bI{\mathbf{I}}
\def\Im{{\rm Im}\,}
\def\Re{{\rm Re}\,}
\def\Ad{{\rm Ad}}
\def\tr{{\rm trace\,}}
\def\diag{{\rm diag}}
\renewcommand{\thesection}{\arabic{section}}
\renewcommand{\thetable}{{\large \thesection.\arabic{equation}}}
\newtheorem{theorem}[equation]{Theorem}
\newtheorem{lemma}[equation]{Lemma}
\newtheorem{corollary}[equation]{Corollary}
\newtheorem{proposition}[equation]{Proposition}
\def\sideremark#1{\ifvmode\leavevmode\fi\vadjust{\vbox to0pt{\vss
 \hbox to 0pt{\hskip\hsize\hskip1em
\vbox{\hsize2cm\tiny\raggedright\pretolerance10000 
 \noindent #1\hfill}\hss}\vbox to8pt{\vfil}\vss}}} 
\begin{document}

\title{Local and Global Homogeneity for Three Obstinate Spheres}

\author{Joseph A. Wolf}\thanks{Research partially supported by a Simons
Foundation grant}
\address{Department of Mathematics \\ University of California, Berkeley \\
	CA 94720--3840, U.S.A.} \email{jawolf@math.berkeley.edu}

\date{file /texdata/working/obstinate.tex, last edited 18 May 2020}

\subjclass[2010]{22E45, 43A80, 32M15, 53B30, 53B35}

\keywords{Riemannian manifold, Riemannian covering,
positive curvature, homogeneous manifold, locally homogeneous manifold} 

\begin{abstract}
In this note we complete a study of globally homogeneous Riemannian 
quotients
$\Gamma\backslash (M,ds^2)$ in positive curvature.  Specifically, $M$ is a 
homogeneous space $G/H$ that admits a $G$--invariant Riemannian metric of 
strictly positive sectional curvature, and $ds^2$ is a $G$--invariant 
Riemannian metric on $M$, not necessarily normal and not necessarily 
positively curved.  The Homogeneity Conjecture is that 
$\Gamma\backslash (M,ds^2)$ is (globally) homogeneous if and only if 
$(M,ds^2)$ is homogeneous and every $\gamma \in \Gamma$ is of constant 
displacement on $(M,ds^2)$.  In an earlier paper we verified that 
conjecture for all homogeneous spaces that admit an invariant 
Riemannian metric of positive 
curvature --- with three exceptions, all odd dimensional spheres, which 
surprisingly did not yield to the earlier approaches.  Here we develop 
some methods that let us verify the Homogeneity Conjecture for those three 
obstinate spheres.  That completes verification of the Homogeneity
Conjecture in positive curvature.
\end{abstract}

\maketitle

\section{Introduction.}\label{sec1} 
\setcounter{equation}{0}
\bigskip

In this note we study homogeneous spaces $M = G/H$ that admit a 
$G$--invariant Riemannian metric of strictly positive curvature.  
Let $ds^2$ be a $G$--invariant Riemannian metric on $M$, not necessarily 
normal and not necessarily positively curved.  We
consider Riemannian quotient manifolds $\Gamma\backslash (M ,ds^2)$
and ask when such a manifold is globally homogeneous.

In \cite{W2020} we verified
a certain conjecture, concerning global
homogeneity for locally homogeneous Riemannian manifolds
$\Gamma \backslash (M,ds^2)$, when $M = G/H$ admits
an invariant Riemannian metric of positive sectional curvature ---
with three exceptions.  In this note we deal with those exceptions.

Let $(M,ds^2)$ be a connected simply connected Riemannian homogeneous
space.  Let $\pi: M \to M'$ be a Riemannian covering.  In other
words $\pi: M \to M'$ is a topological covering space that is a local
isometry.  Then the base of the covering must have form $M' = 
\Gamma \backslash M$
where $\Gamma$ is a discontinuous group of isometries of $M$ such that
only the identity element has a fixed point.  Clearly $M'$, with the 
induced Riemannian metric $ds'^2$ from $\pi: M \to M'$, is locally 
homogeneous.  We ask when $(M',ds'^2)$ is globally homogeneous.

If $M' = \Gamma \backslash M$ is homogeneous then \cite{W1960} every
element $\gamma \in \Gamma$ is of constant displacement
$\delta_\gamma(x) = dist(x,\gamma x)$ on $M$.  For the identity
component of the isometry group $\mathbf{I}(M',ds'^2)$ lifts to the normalizer 
$N_{\bI(M,ds^2)}(\Gamma)$ of $\Gamma$ in the isometry group $\bI(M,ds^2)$,
and $N_{\bI(M,ds^2)}(\Gamma)/\Gamma$
is a transitive group of isometries on $(M',ds'^2)$.  Since $\Gamma$ 
is discrete the identity component of that normalizer actually
centralizes $\Gamma$ in $\bI(M,ds^2)$, and this centralizer is transitive
on $M$.  If $x, y \in M$ and $\gamma \in \Gamma$ we write 
$y = g(x)$ with $g$ in the centralizer of $\Gamma$.  Compute $\delta_\gamma(y)
= dist(y,\gamma y) = dist(g x,\gamma g x) = dist(gx,g\gamma x) 
= dist(x,\gamma x) = \delta_\gamma(x)$.  That is the easy half of the

\noindent {\bf Homogeneity Conjecture.}
{\it Let $M$ be a connected, simply connected Riemannian homogeneous manifold
and $M \to \Gamma \backslash M$ a Riemannian covering.  Then 
$\Gamma \backslash M$ is homogeneous if and only if every $\gamma \in \Gamma$
is an isometry of constant displacement on $M$.}

Over the years there has been a lot of work on this conjecture,
implicitly beginning in the thesis of Georges Vincent 
\cite[\S 10.5]{V1947},  who noted that the linear transformations
$\diag\{R(\theta), \dots , R(\theta)\}, R(\theta) = \left ( \begin{smallmatrix}
\cos(\theta) & \sin(\theta) \\ -\sin(\theta) & \cos(\theta) \end{smallmatrix}
\right )$, are of constant displacement on the sphere $S^{2n-1}$.  

I extended this to a proof of the Homogeneity Conjecture, first for
spherical space forms \cite{W1961} and then for locally symmetric
Riemannian manifolds \cite{W1962}.  The proof 
used classification and case by case checking.  This was partially 
improved by Freudenthal \cite{F1963} and 
Ozols (\cite{O1969}, \cite{O1973}, \cite{O1974})
for the case where $\Gamma$ is contained in the identity component 
of $\bI(M,ds^2)$.  

The Homogeneity Conjecture is valid for locally
symmetric Finsler manifolds as well \cite{DW2013}.

A number of special cases of the Homogeneity Conjecture have
been verified.  Rather that make a long list I'll just note that many
of them are listed in \cite{W2018} and \cite{W2020}.

In Section \ref{sec2} we recall some facts about homogeneous spaces
that admit a Riemannian metric of strictly positive sectional curvature.
We then establish some basic tools that we need for the open cases.

In Section \ref{sec3} we settle the first open case, the 3-sphere 
as the group manifolds $SU(2)$ with left translations.  The technique
is to use the Maurer--Cartan forms on the group.

In Section \ref{sec4} we go to the next open case, $SU(m+1)/SU(m) 
= S^{2m+1}, m \geqq 1$.  This uses elementary matrix methods.

In Section \ref{sec5} we go to the open case $Sp(m+1)/Sp(m) = S^{4m+3}$
with the restriction $m \geqq 2$.  This restriction is needed for some
Weyl group considerations.  We use a mixture of Weyl group methods,
split fibrations (\cite{W2018} and \cite{W2020}), 
and computation with quaternionic matrices.

In Section \ref{sec6} we go to the last open case, $Sp(2)/Sp(1) = S^7$,
where we draw on methods from Section \ref{sec5} but take advantage of
the specific setting.

Finally, in Section \ref{sec7}, we summarize the results of Sections
\ref{sec2} through \ref{sec6} and describe how this completes the proof of
the Homogeneity Conjecture for homogeneous manifolds that admit 
an invariant metric of strictly positive sectional curvature.

Along the way we describe the isometries of constant displacement.

\bigskip
\section{The Classification for Positive Curvature.}\label{sec2}
\setcounter{equation}{0}
\bigskip

Here are the three homogeneous spaces considered in this note.
The numbering is retained from \cite[Table 2.1]{W2020}.
The spaces and the isometry
groups are listed in the first two columns of Table \ref{obstinate-table}
below.  The third column lists some fibrations that will be relevant to our
verification of the Homogeneity Conjecture for manifolds that admit 
an invariant
metric of strictly positive curvature.  See \cite[Section 4]{Z2007} for a 
description of exactly which invariant metrics have positive sectional 
curvature.

\addtocounter{equation}{1}
{\footnotesize
\begin{longtable}{|r|l|l|c|}
\caption*{\bf {\normalsize Table} \thetable \quad {\normalsize
        The Three Obstinate Spheres}}
\label{obstinate-table} \\
\hline
 & $M = G/H$ & $\bI(M,ds^2)$ & $G/H \to G/K$  \\ \hline
\hline
\endfirsthead
\multicolumn{4}{l}{{\normalsize \textit{Table \thetable\, continued from
        previous page $ \dots$}}} \\
\hline
 & $M = G/H$ & $\bI(M,ds^2)$ & $G/H \to G/K$ \\ \hline
\hline
\endhead
\hline \multicolumn{4}{r}{{\normalsize \textit{$\dots$ Table \thetable\,
        continued on next page}}} \\
\endfoot
\hline
\endlastfoot
\hline
{\rm 15} & $S^{2m+1} = SU(m+1)/SU(m)$ & $U(m+1)\rtimes\Z_2$ & 
		$S^{2m+1} \to P^m(\C)$
        \\ \hline
{\rm 16} & $S^{4m+3} = Sp(m+1)/Sp(m)$ & $Sp(m+1)\rtimes_{\Z_2} Sp(1)$ &
		$S^{4m+3} \to P^m(\H)$
        \\ \hline
{\rm 17} & $S^3 = SU(2)$ & $O(4)$ & $S^3 \to P^1(\C) = S^2$
        \\ \hline
\end{longtable}
}

\begin{theorem}\label{conj-non-normal}
Let $M = G/H$ be a connected, simply connected homogeneous space.  Suppose 
that $M$ has a $G$--invariant Riemannian metric of strictly positive curvature.
Let $ds^2$ be any $G$--invariant Riemannian metric on $M$, not necessarily the 
normal or the positively curved metric. 
Suppose further that $M = G/H$ is not
one of the entries {\rm (15), (16)} or {\rm (17)} of 
{\rm Table \ref{obstinate-table}}.  Then the Homogeneity Conjecture
is valid for $(M,ds^2)$.
\end{theorem}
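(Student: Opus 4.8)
The statement is the clean summary of the work done in [W2020], so the proof should be a reduction to that paper's Table 2.1 and the case analysis there. First I would invoke the classification of simply connected homogeneous spaces $M = G/H$ admitting a $G$-invariant metric of strictly positive sectional curvature — the Berger/Wallach/Aloff--Wallach/Bérard-Bergery list, as tabulated in [W2020, Table 2.1]. The point is that this is a \emph{finite} list (the isotropy-irreducible symmetric spaces $S^n$, $P^n(\C)$, $P^n(\H)$, $P^2(\mathbb{O})$; the Wallach flag manifolds; the Aloff--Wallach spaces $N_{k,l}$; the Berger spaces $B^7, B^{13}$; and the group manifold $SU(2)$ with left translations), together with the exceptional families of spheres realized via $SU(m+1)/SU(m)$ and $Sp(m+1)/Sp(m)$. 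The hypothesis of the theorem is precisely that $M=G/H$ is \emph{not} one of entries (15), (16), (17) of Table \ref{obstinate-table}, so what remains is every other entry of that classification.

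\textbf{Step two: the easy direction, uniformly.} For any entry, the ``only if'' direction of the Homogeneity Conjecture is free: it is proved in the Introduction above (following [W1960]) for all connected simply connected Riemannian homogeneous $M$, with no restriction. So the content is the ``if'' direction — assuming every $\gamma\in\Gamma$ is of constant displacement on $(M,ds^2)$, produce a transitive group of isometries on $\Gamma\backslash M$, equivalently show the centralizer $Z_{\bI(M,ds^2)}(\Gamma)$ is transitive on $M$.

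\textbf{Step three: run the case analysis of [W2020].} For each remaining entry one needs the full isometry group $\bI(M,ds^2)$ of an arbitrary invariant (not necessarily positively curved) metric; [W2020] records these, and in each case $\bI(M,ds^2)$ differs from $\bI(M,\text{normal})$ only by a compact factor or a finite group, so its structure is under control. Then: (a) for the symmetric-space entries one quotes the earlier symmetric-space theorem [W1962]; (b) for the flag manifolds (Wallach spaces) and the Berger spaces the argument of [W2020] shows $\Gamma$ must be trivial or central, because a Clifford--Wolf translation would force a nontrivial element of the (finite) centre or an outer automorphism, which is handled directly; (c) for the Aloff--Wallach spaces $N_{k,l}$ one uses the split-fibration technique of [W2018], reducing constant-displacement elements of $\Gamma$ to elements acting through the $S^1$ in the fibration, which are manifestly of the Vincent type and whose centralizer is transitive. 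In every case the upshot is: a constant-displacement $\gamma$ lies in a torus (or finite centre) whose centralizer already acts transitively, so $\Gamma\backslash M$ is homogeneous.

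\textbf{The main obstacle.} The genuine difficulty is not the logical skeleton — which is the reduction above — but the fact that this ``theorem'' is really a \emph{meta-statement} asserting that the case-by-case verification of [W2020] goes through for every entry \emph{except} the three spheres. So the honest proof here is just: cite [W2020] for all entries other than (15), (16), (17), noting that [W2020] explicitly leaves exactly those three open and disposes of all the rest. The real mathematical work — handling (15), (16), (17) — is deferred to Sections \ref{sec3}--\ref{sec6}, and the hard part there (the Weyl-group bookkeeping for $Sp(m+1)/Sp(m)$ and the quaternionic matrix computations for $S^7$) is precisely why $m\geqq 2$ must be assumed in Section \ref{sec5} and why $S^7$ gets its own section. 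Thus, for the present statement, the proof is a one-paragraph appeal to [W2020, Theorem ... and Table 2.1], with the remark that nothing in this note is needed: Theorem \ref{conj-non-normal} is simply [W2020] restated with the three exceptional entries excised.
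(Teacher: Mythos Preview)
Your proposal is correct and matches the paper's treatment: the paper does not prove Theorem \ref{conj-non-normal} at all but simply states ``This is the main result of \cite{W2020}'' immediately after it. Your final paragraph identifies this exactly --- the theorem is a restatement of \cite{W2020} with the three exceptional spheres excised, and all the surrounding discussion you give (the classification list, the easy direction, the case analysis) is helpful context but not part of any proof required here.
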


This is the main result of \cite{W2020}.  The purpose of this paper is to 
extend it to the cases of Table \ref{obstinate-table} as well.  This will
make use of a few simple observations.

\begin{lemma} \label{conj}
Let $g$ and $\gamma$ be isometries of a Riemannian manifold
$(M,ds^2)$.  Suppose that $\gamma$ is of constant displacement $c$.  Then
$g^{-1}\gamma g$ is of the same constant displacement $c$.  
\end{lemma}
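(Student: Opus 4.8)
The plan is to unwind the definition of the displacement function and use only the fact that $g$ preserves Riemannian distance. Recall that for an isometry $\phi$ of $(M,ds^2)$ the displacement function is $\delta_\phi(x) = dist(x,\phi x)$, and the hypothesis that $\gamma$ has constant displacement $c$ says $\delta_\gamma(x) = c$ for every $x \in M$. The goal is to show $\delta_{g^{-1}\gamma g} \equiv c$ as well.

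First I would fix an arbitrary $x \in M$ and write
\[
\delta_{g^{-1}\gamma g}(x) = dist\big(x,\, g^{-1}\gamma g\, x\big).
\]
Since $g$ is an isometry it preserves $dist$, so applying $g$ to both arguments gives $dist\big(x, g^{-1}\gamma g\, x\big) = dist\big(g x,\, \gamma g\, x\big) = \delta_\gamma(g x)$. By hypothesis $\delta_\gamma$ is identically $c$, hence $\delta_{g^{-1}\gamma g}(x) = c$. As $x$ was arbitrary, $g^{-1}\gamma g$ is of constant displacement $c$, and the displacement equals that of $\gamma$.

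There is essentially no obstacle here: the only point to keep straight is that ``$g$ is an isometry'' must be used in its global metric form $dist(gp, gq) = dist(p,q)$ (valid for any isometry of a connected Riemannian manifold), not merely infinitesimally; no appeal to completeness, curvature, or the group $\bI(M,ds^2)$ is needed. Equivalently, one may observe that conjugation $\phi \mapsto g^{-1}\phi g$ is an automorphism of $\bI(M,ds^2)$ sending $\delta_\gamma$ to $\delta_{g^{-1}\gamma g} = \delta_\gamma \circ g$, so it carries the constant function $\delta_\gamma$ to the same constant.
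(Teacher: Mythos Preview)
Your proof is correct and is essentially identical to the paper's own argument: both compute $dist(x, g^{-1}\gamma g\, x) = dist(gx, \gamma gx) = c$ using only that $g$ preserves the distance function. The only difference is that you add some extra commentary about the metric form of isometry and the alternative viewpoint via $\delta_{g^{-1}\gamma g} = \delta_\gamma \circ g$, which is fine but not needed.
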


\begin{proof} The distance $\rho(y,\gamma y) = c$ for all $y \in M$.  Compute 
$\rho(x,g^{-1}\gamma g x) = \rho(gx, \gamma gx) = c$ for all
$x \in M$.
\end{proof}

We extend Lemma \ref{conj} to geodesics.  By horizontal projection in the
tangent bundle of $G$ we mean projection to the horizontal subspaces for
the Levi--Civit\` a connection of $ds^2$.  One easily picks this out on
the Lie algebra when the representation $\Ad_G|_H$ of $H$ on the tangent
space $\gg/\gh$ is disjoint (no common summand) from the adjoint 
representation of $\gh$.

\begin{lemma}\label{s-conj}
Let $\gamma$ be an isometry of constant displacement $c$ in a homogeneous 
Riemannian manifold $(M,ds^2)$, where $M = G/H$ with $G$ connected and 
$ds^2$  $G$--invariant.  Let $t \mapsto \sigma(t)$ be a minimizing 
geodesic from $x_0 = 1H \in M$ to $\gamma(x_0)$, parameterized proportional 
to arc length with $\sigma(0) = x_0$ and $\sigma(1) = \gamma(x_0)$\,.  
Let $\pi: G \to M$ be the projection and let $\widetilde{\sigma}$ denote 
the lift of $\sigma$ to a horizontal curve in $G$ with 
$\widetilde{\sigma}(0) = 1$.  Let $g \in G$  and $\beta(t) =
\pi([\Ad(g)\widetilde{\sigma}(t)]g)$.  Then $\beta$ is a 
minimizing geodesic from $g(x_0)$ to $g(\gamma(x_0))$ and the
horizontal component of $\Ad(g)[\widetilde{\sigma}'(0)]$
has the same length $c$ as $\widetilde{\sigma}'(0)$.
\end{lemma}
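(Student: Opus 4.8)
The plan is to reduce the statement to the constant-displacement property of $\gamma$ by exhibiting $\beta$ as the image under the isometry $g$ of the geodesic $\sigma$, and then to identify the horizontal lift of $g\circ\sigma$ with the curve $t\mapsto [\Ad(g)\widetilde\sigma(t)]g$ in $G$. First I would observe that, since $ds^2$ is $G$-invariant and $g$ acts by an isometry, $g\circ\sigma$ is a minimizing geodesic from $g(x_0)$ to $g(\gamma(x_0))$, parameterized proportional to arc length; in particular its length equals $c$, the constant displacement of $\gamma$ (it runs from $g(x_0)$ to $g\gamma(x_0) = (g\gamma g^{-1})(g(x_0))$, and by Lemma \ref{conj} the conjugate $g\gamma g^{-1}$ is also of constant displacement $c$). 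So everything hinges on showing $\beta = g\circ\sigma$ and that the lift $t\mapsto [\Ad(g)\widetilde\sigma(t)]g$ is the horizontal lift of $\beta$ through $1\in G$.

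For the first point: by definition of $\pi$ we have $\pi([\Ad(g)\widetilde\sigma(t)]g) = \pi(g\widetilde\sigma(t)g^{-1}g) = \pi(g\widetilde\sigma(t)) = g\cdot\pi(\widetilde\sigma(t)) = g(\sigma(t))$, using $\Ad(g)x = gxg^{-1}$ and the equivariance of $\pi$ under left translation. Hence $\beta = g\circ\sigma$ as curves in $M$, which already gives that $\beta$ is the asserted minimizing geodesic. For the second point I would note that left translation $L_g$ on $G$ covers the isometry $g$ on $M$ and, being an isometry of $G$ for any bi-invariant-compatible setup (more to the point: $L_g$ carries horizontal subspaces to horizontal subspaces, since the horizontal distribution for a $G$-invariant metric is left-invariant — this is where the remark about $\Ad_G|_H$ being disjoint from $\ad_{\gh}$ pins down the horizontal distribution canonically and makes it $G$-invariant), the curve $L_g\circ\widetilde\sigma$ is the horizontal lift of $g\circ\sigma$ starting at $g$. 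Composing further with the right translation $R_{g^{-1}}R_g$—i.e. writing $L_g\widetilde\sigma(t) = \Ad(g)\widetilde\sigma(t)\cdot g$—keeps us horizontal only if right translation by $g$ also preserves horizontality at the relevant points; but in fact we do not need $t\mapsto\Ad(g)\widetilde\sigma(t)g$ to be globally horizontal, only that its projection is $\beta$ (already shown) and that its initial velocity is $\Ad(g)[\widetilde\sigma'(0)]$, whose horizontal component then has length equal to $\|\beta'(0)\| = c$ because $\beta$ is parameterized proportional to arc length with total length $c$ over $[0,1]$. Concretely, differentiate $\beta(t) = g(\sigma(t))$ at $t=0$: $\beta'(0) = dg_{x_0}(\sigma'(0))$ has length $\|\sigma'(0)\| = c$ since $g$ is an isometry and $\|\sigma'(0)\|=c$; on the other hand $\beta'(0)$ is the image under $d\pi$ of the horizontal component of $\tfrac{d}{dt}\big|_0[\Ad(g)\widetilde\sigma(t)g] = \Ad(g)[\widetilde\sigma'(0)]$ (the right-translation factor $g$ is constant in $t$), and $d\pi$ restricted to the horizontal subspace is a linear isometry onto the tangent space of $M$. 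Therefore the horizontal component of $\Ad(g)[\widetilde\sigma'(0)]$ has length $c = \|\widetilde\sigma'(0)\|$, as claimed.

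The main obstacle I anticipate is bookkeeping around which translations preserve the horizontal distribution and making sure the "horizontal component" is taken with respect to the correct (left-invariant) connection, so that $d\pi$ acts as an isometry on it. The disjointness hypothesis on $\Ad_G|_H$ versus $\ad_{\gh}$ recalled just before the lemma is exactly what guarantees the horizontal subspace at $1$ is the canonical complement to $\gh$ in $\gg$ determined by $ds^2$, hence that its left translates form a well-defined $G$-invariant horizontal distribution; once that is in place, the argument above is a short chain of equalities. A secondary point worth stating carefully is that $\widetilde\sigma'(0)$ is itself horizontal (it is the initial velocity of a horizontal lift), so $\|\widetilde\sigma'(0)\| = \|\sigma'(0)\| = c$, which is the length we are comparing against.
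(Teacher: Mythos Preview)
Your argument follows essentially the same path as the paper's: rewrite $\beta(t)=\pi(\Ad(g)\widetilde\sigma(t)\cdot g)=\pi(g\widetilde\sigma(t))=g\cdot\sigma(t)$, conclude that $\beta$ is a minimizing geodesic with $\|\beta'(0)\|=c$, and then identify the horizontal component of $\Ad(g)\widetilde\sigma'(0)$ with the horizontal lift of $\beta'(0)$ and invoke the submersion isometry. The paper's version is more compressed (it simply notes that $\widetilde\sigma'$ is basic, hence so are its left $G$--translates), and your extra remarks on which translations preserve the horizontal distribution are useful elaboration, but the substance is identical; one small slip to fix is that the lifted curve $\Ad(g)\widetilde\sigma(t)\cdot g$ starts at $g$, not at $1$.
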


\begin{proof}
Since the vector field $\widetilde{\sigma}'$ is basic, so are all its left
$G$--translates, and thus $\beta(t) = 
\pi(\Ad(g)[\widetilde{\sigma}(t)]g) = 
\pi(g\widetilde{\sigma}(t))$ is a minimizing geodesic from 
$\beta(0) = gx_0$ to $\beta(1) = g\gamma x_0$.
In particular the square length $||\beta'(0)||^2 = ||\sigma'(0)||^2 = c^2$.
Thus the horizontal component of $\Ad(g)[\widetilde{\sigma}'(0)]$,
which is the horizontal lift of $\beta'(0)$,
has the same length $c$ as $\widetilde{\sigma}'(0)$.
\end{proof}

\bigskip
\section{$SU(2) = Sp(1) = S^3$.}\label{sec3}
\setcounter{equation}{0}
\bigskip

We consider $S^3$ as the group manifold with $SU(2) = Sp(1)$ acting
by left translations.  
Let $\{\omega_1\,, \omega_2\,, \omega_3\}$ denote the (left--invariant) 
Maurer--Cartan forms for the group $SU(2)$.   Then the constant curvature
metrics are the $ds^2 = a(\omega_1^2 + \omega_2^2 + \omega_3^2$), $a > 0$\,  
with isometry group $\bI^0(S^3,ds^2) = [SU(2) \times SU(2)]/\{(I,I),(-I,-I)\}$ 
acting by $(g,h): x \mapsto gxh^{-1}$ and $\bI(S^3,ds^2) = \bI^0(S^3,ds^2) \cup
s\bI^0(S^3,ds^2)$ with $\Ad(s)(g,h) = (h,g)$.
Up to $O(4)$--conjugacy, every left--invariant Riemannian metric on $SU(2)$
has form $\sum a_i\omega_i^2$ with each $a_i > 0$.  Thus, for verification
of the Homogeneity Conjecture there are only three cases, as follows

\begin{lemma}\label{3cases}
The left $SU(2)$--invariant metrics on $S^3$, and their isometry
groups , are equivalent, up to $SO(4)$--conjugacy, to one of these:
\begin{itemize}
  \item[(1)] $ds^2 = \omega_1^2 + \omega_2^2 + \omega_3^2$ with
 $\bI(S^2,ds^2)$ as described above,
  \item[(2)] $ds^2 = \omega_1^2 + \omega_2^2 + a\omega_3^2$\,,
 $0 < a \ne 1$, with $\bI(S^2,ds^2) = SU(2) \times U(1)$, and
  \item[(3)] $ds^2 = \sum a_i\omega_i^2$ with $\{a_1,a_2,a_3\}$
 distinct and $\bI(S^2,ds^2) = SU(2) \times \{1\}$.
\end{itemize}
\end{lemma}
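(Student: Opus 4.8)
The plan is to prove the two assertions of the lemma in turn: first the reduction of an arbitrary left $SU(2)$--invariant metric to the diagonal form $\sum a_i\omega_i^2$, and then the determination of the isometry group from the pattern of equalities among $a_1,a_2,a_3$. A left--invariant metric on $S^3 = SU(2)$ is the same datum as a positive definite symmetric bilinear form $\beta$ on $\mathfrak{su}(2) = T_1SU(2)$, namely its value at the identity; writing $\{X_i\}$ for the basis of $\mathfrak{su}(2)$ dual to $\{\omega_i\}$, the metric is $\sum_{ij}\beta(X_i,X_j)\,\omega_i\omega_j$. Let $\beta_0 = \sum\omega_i^2$ at $1$ be the bi--invariant form. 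By the spectral theorem there is a $\beta_0$--orthonormal basis of $\mathfrak{su}(2)$ diagonalizing $\beta$; the corresponding change of basis lies in $O(\mathfrak{su}(2),\beta_0)\cong O(3)$, and after reversing the sign of one basis vector and permuting the basis --- neither of which disturbs diagonality --- we may take it in $SO(\mathfrak{su}(2),\beta_0)\cong SO(3)$. Since $\Ad\colon SU(2)\to SO(\mathfrak{su}(2),\beta_0)$ is onto and $\Ad(g)$ is the differential at $1$ of the inner automorphism $x\mapsto gxg^{-1}$, an element of $\bI^0(S^3,\beta_0)=SO(4)$, this diagonalization is realized by an $SO(4)$--conjugation. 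After a harmless rescaling and a further reordering inside $SO(3)$ we arrive at $ds^2 = \sum a_i\omega_i^2$ with $a_1\ge a_2\ge a_3>0$ --- which sharpens the $O(4)$--conjugacy remark recorded before the lemma to $SO(4)$--conjugacy --- and the three possibilities for the multiplicities of $\{a_1,a_2,a_3\}$ (all equal, exactly two equal, all distinct) are, after this reordering, precisely cases (1), (2), (3).

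For the isometry group, write $ds^2 = ds^2_\beta$ with $\beta = \diag(a_1,a_2,a_3)$. The left translations $L(SU(2))$ are always isometries, so $\bI^0(S^3,ds^2_\beta)$ is a compact connected Lie group acting transitively and effectively on $S^3$ and containing $L(SU(2))$; its isotropy at $1$ acts faithfully and orthogonally on $T_1SU(2)\cong\R^3$, so $\dim\bI^0\in\{3,4,6\}$, and a short structural argument (it is classical that the only connected compact Lie groups acting transitively and effectively on $S^3$ are these) forces $\bI^0(S^3,ds^2_\beta)$ to be one of $SU(2)$, $U(2)=(SU(2)\times U(1))/\Z_2$, or $SO(4)$. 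If $\bI^0=SO(4)$ then $\beta$ is $SO(4)$--invariant, hence bi--invariant, hence all the $a_i$ are equal: case (1), where $\bI$ is as recalled in the text. If $\bI^0=U(2)$, realized as $\{(g,z)\colon x\mapsto gxz^{-1}\}$ with $z\in U(1)=\exp(\R X_3)\subset SU(2)$, then $\beta$ is invariant under the isotropy $U(1)$, which fixes $X_3$ and acts irreducibly on its $\beta_0$--complement; this forces $\beta=\diag(b,b,c)$, so exactly two of the $a_i$ coincide: case (2), with $b\ne c$ lest $\beta$ be already round, and $\bI^0=U(2)$, the group listed in the statement as $SU(2)\times U(1)$. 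Finally, the metrics with $a_1,a_2,a_3$ all distinct can have neither $U(2)$ nor $SO(4)$ as isometry group, since either would force a coincidence among the $a_i$; hence $\bI^0=L(SU(2))\cong SU(2)$: case (3).

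To obtain the full group $\bI(S^3,ds^2_\beta)$ in the two non--round cases, note that $\bI$ normalizes its identity component, and that $L(SU(2))$ is characteristic in $\bI^0$ there --- it is $\bI^0$ itself when $\bI^0=SU(2)$, and it is the commutator subgroup $[U(2),U(2)]=SU(2)$ when $\bI^0=U(2)$ --- so $\bI$ normalizes $L(SU(2))$. An automorphism of $SU(2)$ is inner, so after composing with a suitable left translation every isometry commutes with $L(SU(2))$ and is therefore a right translation $R_h\colon x\mapsto xh$; and since left and right translations commute, $R_h$ is an isometry of $ds^2_\beta$ precisely when $\Ad(h)$ preserves $\beta$. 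Hence $\bI(S^3,ds^2_\beta)$ is the group of all $L_gR_h$ with $\Ad(h)$ in the group of rotations preserving $\beta$: when the $a_i$ are distinct this rotation group is the Klein four--group $\{\Ad(1),\Ad(i),\Ad(j),\Ad(k)\}$, so $\bI$ is $L(SU(2))$ extended by the right translations along the quaternion units, with identity component $SU(2)$; and when $\beta=\diag(b,b,c)$ it is the symmetry group $O(2)$ of the spheroid $\beta(x,x)=1$, equal to $\Ad(N_{SU(2)}(\exp\R X_3))$, so $\bI^0=U(2)$. These identity components are the groups $SU(2)\times\{1\}$ and $SU(2)\times U(1)$ of the statement, and the printed $S^2$ is a typographical slip for $S^3$.

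The step I expect to carry the weight is the determination of $\bI^0$: it rests on the (classical, but not one--line) classification of connected compact Lie groups acting transitively on $S^3$ --- equivalently, on ruling out a four-- or six--dimensional isometry group when the coefficients $a_i$ are all distinct, and, conversely, on exhibiting the enlarged symmetry for a diagonal $\beta$ with a repeated eigenvalue. The normalization to diagonal form, the identification of an automorphism of $SU(2)$ as inner, and the bookkeeping of which rotations preserve $\beta$ are elementary.
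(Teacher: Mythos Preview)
Your argument is correct. The paper itself gives no proof of this lemma: it is stated as a known fact, with only the sentence preceding it (``Up to $O(4)$--conjugacy, every left--invariant Riemannian metric on $SU(2)$ has form $\sum a_i\omega_i^2$ with each $a_i>0$'') serving as justification for the diagonal form. Your write-up therefore supplies what the paper omits.

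A few remarks on points you already flagged. First, the reduction to the specific normal forms in (1) and (2) requires an overall rescaling in addition to the $SO(4)$--conjugacy; strictly speaking $SO(4)$--conjugacy alone cannot change the volume, so the lemma as printed is slightly imprecise. You handle this correctly with your ``harmless rescaling''. Second, you are right that the groups listed in (2) and (3) are the identity components rather than the full isometry groups: in case (3) the full group is $L(SU(2))$ extended by right translation by the quaternion group $Q_8$, and in case (2) by $N_{SU(2)}(\exp\R X_3)$. Your deduction that every isometry lies in $SO(4)$ in cases (2) and (3) --- via ``$L(SU(2))$ is characteristic in $\bI^0$, automorphisms of $SU(2)$ are inner, and diffeomorphisms commuting with all left translations are right translations'' --- is the clean way to see this, and in particular rules out orientation-reversing isometries in the non-round cases. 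Third, the typographical $S^2$ for $S^3$ is indeed a slip in the original.

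The only place your exposition leans on outside input is the classification of connected compact groups transitive on $S^3$; you could avoid this by arguing directly from the isotropy representation (a closed subgroup of $O(3)$ preserving $\beta$), which already forces $\dim\bI^0\in\{3,4,6\}$ and pins down the three cases without quoting Montgomery--Samelson. But either route is fine for a lemma the paper treats as folklore.
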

Let $\Gamma \subset \bI(S^3,ds^2)$ be a finite group of constant displacement 
isometries of $(S^3,ds^2)$, and $\gamma \in \Gamma$.  From 
\cite[Lemma 4.2.2]{W1962}, $\gamma$ has form 
$\pm (g,h) \in [SU(2) \times SU(2)]/\{(1,1),(-1,-1)\}$.  In other
words,
\begin{equation}\label{split}
\Gamma \subset G \text{ where } 
	G = \bI(S^3,ds^2) \cap [SU(2) \times H]/[\pm (1,1)]
\end{equation}
for a subgroup $H \subset SU(2)$.  Note that right translations by 
elements of $H$ are isometries.

Let $\rho$ be the
distance function and $c = \rho(1,gh^{-1})$.  If $x \in S^3$ now
$c = \rho(x,gxh^{-1}) = \rho(1,x^{-1}gx\cdot h^{-1}) 
= \rho(h, x^{-1}gx)$.  So $c$ is the distance from
$h$ to any conjugate of $g$. 
A minimizing geodesic segment $\sigma$ from $h$ to $g$ meets
$\Ad(SU(2))g$ orthogonally.  It follows that $\sigma$ 
is tangent at $g$ to the $SU(2)$--centralizer of $g$.  

Consider a perturbation $\{\sigma_t\}$ of $\sigma$ as a minimizing
geodesic from $h$ to $g_t \in \Ad(SU(2))g$.
If $g \ne \pm 1$ then each $\sigma_t$ meets $\Ad(SU(2))g = \Ad(SU(2)g_t$
orthogonally.  Thus $\Ad(SU(2))g$ is half way to the cut locus of $h$, 
$\dim \Ad(SU(2))g = 2$, and each centralizer $Z_{SU(2)}(xgx^{-1})$ has
dimension $1$.  It follows that the image of each $\sigma_t$, which includes
both $h$ and $g_t$\,, centralizes $h$.  In other words $h$ commutes with
every conjugate of $g$.  Those conjugates generate $SU(2)$, so $h = \pm 1$.
We have proved:

\begin{proposition}\label{pmg} 
Let $\Gamma \subset \bI(S^3,ds^2)$ be a finite group of 
constant displacement isometries of $(S^3,ds^2)$.  If
$\gamma = \pm (g,h) \in \Gamma$ and $g \ne \pm 1$ then $h = \pm 1$.
\end{proposition}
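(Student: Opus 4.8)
The plan is to turn the constant-displacement hypothesis into an equidistance statement about the conjugacy class of $g$, and then extract the conclusion from the geometry of $S^3$. Write $\gamma=\pm(g,h)$; by \eqref{split} we may take $h\in H$, so right translation $R_{h^{-1}}$ is an isometry. Using left invariance and then right-$H$ invariance of $ds^2$, for every $x\in SU(2)$
\begin{equation*}
c=\rho(x,gxh^{-1})=\rho(1,x^{-1}gx\,h^{-1})=\rho(h,\,x^{-1}gx),
\end{equation*}
so the single point $h$ lies at distance exactly $c$ from every member of the conjugacy class $C=\{x^{-1}gx:x\in SU(2)\}$. Since $g\neq\pm1$, $C$ is a smooth $2$--sphere in $S^3$ and each centralizer $Z_{SU(2)}(p)$, $p\in C$, is a maximal torus (a circle). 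I want to force $h$ into $Z_{SU(2)}(p)$ for enough $p$ that $h$ is central, i.e.\ $h=\pm1$.

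Here is the local mechanism. Fix $p\in C$ and a minimizing geodesic $\sigma$ from $h$ to $p$; since $\rho(h,\cdot)\equiv c$ on $C$, the first variation formula forces $\sigma$ to meet $C$ orthogonally at $p$, so $\sigma'(p)$ spans the one-dimensional $ds^2$--normal line of $C$ at $p$. Now suppose $p=\exp(\theta e_j)$ lies on a coordinate one-parameter subgroup ($e_j$ the principal axis dual to $\omega_j$ in the normal form $ds^2=\sum a_i\omega_i^2$ of Lemma \ref{3cases}, and $e_j^{\perp}$ its bi-invariant orthogonal complement). Then two facts apply: (i) $T_pC$ is the left translate to $p$ of $e_j^{\perp}$, whose $ds^2$--orthogonal complement is the line $\R e_j$ at $p$ — the tangent line to $Z_{SU(2)}(p)=\exp(\R e_j)$; and (ii) $\exp(\R e_j)$ is a $ds^2$--geodesic, because $e_j\perp_{ds^2}[e_j,\gs\gu(2)]=e_j^{\perp}$. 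Hence $\sigma$ and the closed geodesic $\exp(\R e_j)$ pass through $p$ with proportional velocities, so $\sigma\subset\exp(\R e_j)=Z_{SU(2)}(p)$; in particular $h$ commutes with $p$.

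To finish I apply this to enough points. In case (1) of Lemma \ref{3cases} (the round metric) every direction is a coordinate direction, so the mechanism applies at every $p\in C$: then $h$ centralizes all of $C$, which generates $SU(2)$ because $g\neq\pm1$, so $h=\pm1$. In case (2) of Lemma \ref{3cases} the coordinate directions are the $\omega_3$--axis and the axes of the orthogonal plane, so instead I first replace $\gamma$ by $L_a\gamma L_a^{-1}=\pm(aga^{-1},h)$ (Lemma \ref{conj}, which preserves the displacement and the right-translation part $h$), choosing $a$ with $aga^{-1}=\exp(\theta e_1)$ for an axis $e_1\perp e_3$ — possible since $\Ad(SU(2))$ is transitive on directions and $g\ne\pm1$. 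The mechanism gives $h\in\exp(\R e_1)$, while $h\in H\subset\exp(\R e_3)$ by \eqref{split}, and $\exp(\R e_1)\cap\exp(\R e_3)=\{\pm1\}$. Case (3) of Lemma \ref{3cases} is vacuous, since there $H=\{1\}$.

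The step I expect to be the main obstacle is the local mechanism — above all the implication ``$\sigma\perp C$ at $p$'' $\Longrightarrow$ ``$\sigma$ is the torus $Z_{SU(2)}(p)$''. This rests on $p$ lying in a metric coordinate direction — without which neither the $ds^2$--normal of $C$ need be the torus tangent nor the torus a geodesic — and the clean way to arrange that is the conjugation in Lemma \ref{conj}; the genuinely squashed metric of case (2) is the delicate one, and there the extra leverage is precisely the a priori constraint $h\in H$. (A more global alternative, closer to the heuristic ``$C$ is half way to the cut locus of $h$'': $C$ lies on the geodesic sphere $S(h,c)$ and both are $2$--dimensional, so $C$ is all of $S(h,c)$, whence every normal geodesic of $C$ issues from $h$ — the same conclusion, at the cost of cut-locus bookkeeping.)
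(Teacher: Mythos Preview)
Your argument is correct, and the opening move (the equidistance identity $c=\rho(h,x^{-1}gx)$ together with the first-variation orthogonality $\sigma\perp C$) is exactly the paper's. The finish, however, is organized differently. The paper runs a single global argument: from the family of minimizing geodesics $\sigma_t$ from $h$ to the points of $C$ it reads off that $C$ is the full distance sphere $S(h,c)$ (``half way to the cut locus of $h$''), so every normal geodesic to $C$ issues from $h$; since the normal direction at each $g_t$ is the centralizer direction, $h$ commutes with every conjugate of $g$, hence $h=\pm 1$. You instead split according to the three metric types of Lemma~\ref{3cases} and, crucially, in case~(2) you invoke Lemma~\ref{conj} to conjugate $g$ onto a principal axis $e_1$, apply your local mechanism at that single point, and then finish by intersecting $\exp(\R e_1)$ with the a~priori constraint $h\in H=\exp(\R e_3)$.

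What each buys: the paper's route is uniform and never names the metric, but the step ``$\sigma$ is tangent to $Z_{SU(2)}(g)$'' and the cut-locus assertion are stated tersely and lean on exactly the principal-axis issue you flagged --- for a generic $p\in C$ in a squashed metric the $ds^2$--normal line to $C$ need not be the centralizer direction, nor the centralizer torus a geodesic. Your case-by-case route is longer and less elegant, but it makes this metric dependence explicit: in the round case every axis is principal, in case~(2) the conjugation of Lemma~\ref{conj} manufactures a principal-axis representative, and case~(3) is vacuous. Your parenthetical ``global alternative'' is essentially the paper's own argument; either path suffices.
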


Now every $\gamma \in \Gamma$ is contained either in $SU(2)\times \{\pm 1\}$
or in $\{\pm 1\}\times H$.  If $\gamma = \pm (g,1) \in \Gamma$
and $\gamma' = \pm (1,h') \in \Gamma$ with $g \ne \pm 1 \ne h'$ then
$\gamma \gamma' = \pm (g,h') \in \Gamma$ violates Proposition \ref{pmg}.  Thus,
using (\ref{split}),

\begin{corollary}\label{pmG}
Let $\Gamma \subset \bI(S^3,ds^2)$ be a finite group of
constant displacement isometries of $(S^3,ds^2)$.  Then either
$\Gamma \subset [SU(2) \times \{\pm 1\}]/[\pm (1,1)]$ or
$\Gamma \subset [\{\pm 1\} \times H]/[\pm (1,1)]$.
\end{corollary}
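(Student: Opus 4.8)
The plan is to derive Corollary \ref{pmG} as a purely group-theoretic consequence of Proposition \ref{pmg} together with the inclusion (\ref{split}). Write
\[
A = [SU(2)\times\{\pm1\}]/[\pm(1,1)], \qquad B = [\{\pm1\}\times H]/[\pm(1,1)],
\]
both subgroups of the group $G$ appearing in (\ref{split}), so that every $\gamma\in\Gamma$ has the form $\pm(g,h)$ with $h\in H$. Proposition \ref{pmg} says exactly that $\Gamma\subset A\cup B$: if the $SU(2)$--component $g$ of $\gamma$ satisfies $g\ne\pm1$ then $h=\pm1$ and $\gamma\in A$, while in the remaining case $g=\pm1$ and $\gamma\in B$.

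Next I would record the trivial observation that an element of $\Gamma$ lying outside $B$ can, after normalizing the overall sign, be written $\pm(g,1)$ with $g\ne\pm1$: indeed $g=\pm1$ would already put the element in $B$, and $h=\pm1$ by the previous paragraph so we may take $h=1$. Symmetrically, an element of $\Gamma$ lying outside $A$ has the form $\pm(1,h')$ with $h'\ne\pm1$.

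The argument then finishes by contradiction. Suppose $\Gamma\not\subset A$ and $\Gamma\not\subset B$. Choose $\gamma=\pm(g,1)\in\Gamma\setminus B$ and $\gamma'=\pm(1,h')\in\Gamma\setminus A$, so that $g\ne\pm1$ and $h'\ne\pm1$. Since $\Gamma$ is a group, the product $\gamma\gamma'=\pm(g,h')$ lies in $\Gamma$; but its $SU(2)$--component $g$ and its $H$--component $h'$ are both $\ne\pm1$, contradicting Proposition \ref{pmg}. Hence $\Gamma\subset A$ or $\Gamma\subset B$, which is the assertion, using (\ref{split}) to identify $A$ and $B$ inside $\bI(S^3,ds^2)$.

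I do not expect a genuine obstacle at this stage: all the substantive work is in Proposition \ref{pmg} (and, behind it, the geodesic/centralizer argument together with \cite[Lemma 4.2.2]{W1962}). The only point requiring care is the bookkeeping with the central quotient by $\{\pm(1,1)\}$, i.e.\ verifying that ``lies in $B$'' is genuinely equivalent to ``some representative has $SU(2)$--component $\pm1$'', and likewise for $A$; this is immediate from the definitions and needs no computation.
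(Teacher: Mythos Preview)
Your proposal is correct and follows essentially the same route as the paper: the paper also observes that Proposition \ref{pmg} forces each $\gamma\in\Gamma$ into $A$ or $B$, and then multiplies an element $\pm(g,1)$ with $g\ne\pm1$ by an element $\pm(1,h')$ with $h'\ne\pm1$ to reach a contradiction with Proposition \ref{pmg}. Your version is slightly more careful about the quotient by $\pm(1,1)$, but the argument is the same.
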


Now consider the two possibilities.  First, if 
$\Gamma \subset [\{\pm 1\} \times H]/[\pm (1,1)]$ then $SU(2)$, acting 
by left translations, centralizes $\Gamma$.  
Then $\Gamma \backslash S^3$ is homogeneous. Now consider the other case:
$\Gamma \subset [SU(2) \times \{\pm 1\}]/[\pm (1,1)]$ and $\Gamma$
has at least one element $\pm (g,1)$ with $g \ne \pm 1$.  

In Case (1) of Lemma \ref{3cases} the right translation group $H = SU(2)$
is transitive on $S^3$, so $\Gamma \backslash S^3$ is homogeneous, and
the Homogeneity Conjecture is valid.  This is a special case of
\cite[Corollary 4.5.3]{W1962}.  

Now we may assume $ds^2 = \omega_1^2
+ a_2\omega_2^2 + a_3 \omega_3^2$ with $a_2 \ne 1$ and $a_2, a_3 > 0$.
Let $\xi_i \in \gs\gu(2)$ denote tangent vectors to $S^3$ at the identity
such that $\omega_i(\xi_j) = 0$ for $i \ne j$ and each 
$t \mapsto \exp(t\xi_i)$ has period $2\pi$.  We have 
$\gamma = \pm (g,1) \in \Gamma$ with $g \ne \pm 1$.  Passing to an
$SU(2)$--conjugate we may assume $g = \exp(t_0\xi_1)$ with $0 < t_0 < \pi$.
Thus $\gamma$ has displacement $t_0$.  This uses Lemma \ref{conj}.  
But passing to another 
$SU(2)$--conjugate we may assume $g = \exp(t_0\xi_2)$ so $\gamma$ has
displacement $a_2t_0 \ne t_0$\,.  (These conjugations are specific to the 
group $SU(2) = Sp(1)$.)  This contradiction shows that, in
cases (2) and (3) of Lemma \ref{3cases}, $\Gamma$ does not contain
an element $\pm (g,1)$ with $g \ne \pm 1$.  We conclude:

\begin{theorem}\label{conj4s3}
Let $ds^2$ be a left $SU(2)$--invariant Riemannian metric on $S^3$.
Let $\Gamma$ be a finite group of isometries of constant displacement
on $(S^3,ds^2)$.  Then the centralizer of $\Gamma$ in $\bI(S^2,ds^2)$
is transitive on $S^3$, so the quotient Riemannian manifold 
$\Gamma \backslash (S^3,ds^2)$ is homogeneous.  In other words, the
Homogeneity Conjecture is valid for $(S^3,ds^2)$.
\end{theorem}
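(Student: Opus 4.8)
The plan is to run through the case analysis assembled in the preceding paragraphs and show that in every case the centralizer of $\Gamma$ in $\bI(S^3,ds^2)$ acts transitively on $S^3$, which by the easy half of the Homogeneity Conjecture (recalled in the introduction) gives homogeneity of $\Gamma\backslash(S^3,ds^2)$. The preparatory work has already reduced matters: by Lemma \ref{3cases} we need only treat the three normal forms for $ds^2$, and by Corollary \ref{pmG} any finite group $\Gamma$ of constant displacement isometries lies either in $[\{\pm 1\}\times H]/[\pm(1,1)]$ or in $[SU(2)\times\{\pm 1\}]/[\pm(1,1)]$ (with possibly a nontrivial element $\pm(g,1)$, $g\neq\pm 1$, in the latter). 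So the proof is really a disposition of these two alternatives against the three metric types.

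The first step is the easy branch: if $\Gamma\subset[\{\pm 1\}\times H]/[\pm(1,1)]$, then $\Gamma$ consists of right translations (up to sign), and the left translation group $SU(2)$ commutes with all of them and is already transitive on $S^3$. Hence the centralizer of $\Gamma$ is transitive and we are done, regardless of which of the three metrics we took. The second step is Case (1) of Lemma \ref{3cases}, the round metric: here $\bI^0$ is the full $[SU(2)\times SU(2)]/\{\pm(1,1)\}$, so even if $\Gamma\subset[SU(2)\times\{\pm 1\}]$ the right translation group $H=SU(2)$ centralizes $\Gamma$ and is transitive. (This also recovers the classical spherical space form result \cite[Cor.\ 4.5.3]{W1962}.) So after these two steps the only remaining situation is: $ds^2=\omega_1^2+a_2\omega_2^2+a_3\omega_3^2$ with $a_2\neq 1$ (Cases (2) and (3)), and $\Gamma$ contains some $\gamma=\pm(g,1)$ with $g\neq\pm 1$.

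The third step is to derive a contradiction from that remaining situation, and this is the crux of the argument. By Lemma \ref{conj} the displacement of $\gamma$ is an $SU(2)$-conjugacy invariant of $g$. Write $g$, after conjugation, as $\exp(t_0\xi_1)$ with $0<t_0<\pi$, where $\xi_i$ are the coordinate tangent vectors with $\omega_i(\xi_j)=\delta_{ij}\cdot(\text{unit})$ and each $\exp(t\xi_i)$ is $2\pi$-periodic; then compute that the displacement equals $t_0$ (a minimizing geodesic from $1$ to $g$ runs along $\exp(t\xi_1)$ and its $ds^2$-length in the $\xi_1$-direction involves the coefficient $1$). But $SU(2)=Sp(1)$ has the special feature that all the $\exp(t\xi_i)$ are conjugate to one another inside $SU(2)$; conjugating $g$ to $\exp(t_0\xi_2)$ instead, the same computation gives displacement $\sqrt{a_2}\,t_0$ (the geodesic now runs in the $\xi_2$-direction, picking up the coefficient $a_2$). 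Since $a_2\neq 1$ these two values disagree, contradicting that constant displacement is well defined. Hence this case is empty, and combining with the first two steps, in all cases the centralizer of $\Gamma$ is transitive, proving the theorem. The main obstacle to watch is the displacement computation itself — making precise that a minimizing geodesic from $1$ to $\exp(t_0\xi_i)$ stays on that one-parameter subgroup and that its length is $\sqrt{a_i}\,t_0$ for $0<t_0<\pi$; here one uses that the $\xi_i$ are eigenvectors of the metric, that the one-parameter subgroups are closed geodesics, and that $t_0<\pi$ keeps us short of the antipode so the obvious arc is minimizing. Once that is in hand, the incompatibility of the two conjugates is immediate from $a_2\neq 1$.
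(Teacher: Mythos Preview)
Your proof is correct and follows essentially the same route as the paper: reduce via Lemma~\ref{3cases} and Corollary~\ref{pmG}, dispose of the right-translation branch and the round-metric case directly, and in the remaining non-round cases conjugate $g$ successively onto the $\xi_1$- and $\xi_2$-circles to force two unequal values for the constant displacement. Your formula $\sqrt{a_2}\,t_0$ is in fact the correct length (the paper's $a_2t_0$ is a slip, harmless since $a_2\neq 1\iff\sqrt{a_2}\neq 1$), and your explicit flagging of the minimizing issue for the one-parameter subgroup is a point the paper passes over; it can be handled by noting that each circle $\exp(t\xi_i)$ is the identity component of the fixed set of an isometric involution of $(S^3,ds^2)$, hence totally geodesic, so any minimizing geodesic between two of its points with a unique minimizer lies on that circle --- and one can always compare using conjugates close enough to $1$ if needed.
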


\section{$SU(m+1)/SU(m) = S^{2m+1}, \,\,m \geqq 2$}\label{sec4}
\setcounter{equation}{0}
\bigskip

Denote $G = SU(m+1)$, $K = U(m)$ and $H = SU(m)$, so
$S^{2m+1} = G/H \to G/K = P^m(\C)$ is a circle bundle.  The
fiber over $z_0 = 1K$ is the center $Z_K$ of $U(m)$.  $G/H$
has tangent space $\gv \oplus \gz_K$ where $\gv$ is the tangent space
$\C^m$ of $G/K$ and $\gz_K$ is the center of $\gk$;
$\gv$ and $\gz_K$ are the (two) irreducible summands of the
isotropy representation of $H$.  

\begin{proposition} \label{su2u}
Let $ds^2$ be Riemannian metric on $M = S^{2m+1}$ invariant under
$G = SU(m+1)$.  Then either the isometry group $\mathbf{I}(M,ds^2)$ is
the orthogonal group $O(2m+2)$, or $\mathbf{I}(M,ds^2) = [U(m+1)\cup\nu U(m+1)]$
where $\Ad(\nu)$ is complex conjugation on $U(m+1)$.
In the first case $(M,ds^2)$ is the constant curvature $(2m+1)$--sphere,
and in the second case $ds^2$ is given by {\rm (\ref{metric})} below.
\end{proposition}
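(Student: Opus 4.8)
The plan is to classify all $G = SU(m+1)$-invariant Riemannian metrics on $S^{2m+1} = G/H$ with $H = SU(m)$, and then read off the isometry group in each case. The starting point is the decomposition of the isotropy representation of $H$ on the tangent space $\gg/\gh = \gv \oplus \gz_K$, where $\gv \cong \C^m$ is the (irreducible, of real dimension $2m$) tangent space of $\C P^m$ and $\gz_K$ is the $1$-dimensional center of $\gk = \gu(m)$ on which $H$ acts trivially. By Schur's lemma, any $G$-invariant metric is determined by an $\Ad(H)$-invariant inner product on $\gg/\gh$; since $\gv$ is irreducible and inequivalent to the trivial summand $\gz_K$, such an inner product must be of the form $s\cdot(\text{std on }\gv) \perp t\cdot(\text{std on }\gz_K)$ for scalars $s,t>0$. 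After rescaling we may take $s=1$, leaving a one-parameter family of metrics; this is the metric recorded as (\ref{metric}) below. (Here I am assuming the referenced equation display the paper sets up right after this statement.) The case where the two scalars are in the ratio giving constant curvature is the round metric.

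Next I would pin down the isometry group. First, $O(2m+2)$ is certainly an upper bound: $S^{2m+1}\subset\R^{2m+2}$ and any Riemannian metric here is conformal to none in general, but the underlying smooth sphere has $\bI$ at most $O(2m+2)$ only in the round case — so more carefully, one argues that $\bI^0(M,ds^2)$ contains $G=SU(m+1)$ acting on the left, and must preserve the parallelization/fibration structure unless extra symmetry forces the round metric. The cleaner route: the isometry group acts on $M$, and its identity component $\bI^0$ is a compact connected Lie group containing $SU(m+1)$ transitively on $S^{2m+1}$; by the Montgomery–Samelson / Borel classification of transitive actions on spheres, the only compact connected groups acting transitively on $S^{2m+1}$ and containing $SU(m+1)$ are $SU(m+1)$, $U(m+1)$, and $SO(2m+2)$ (and $Sp$-type groups when $m$ is odd, $G_2$ for $S^6$, etc., none of which contain $SU(m+1)$ as stated except via these). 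The metric (\ref{metric}) with $t\ne$ (round value) is not preserved by $SO(2m+2)$ because it is not constant curvature (compute sectional curvatures of a $2$-plane inside $\gv$ versus a mixed $\gv$–$\gz_K$ plane — they differ), so in the non-round case $\bI^0 = U(m+1)$. The round case gives $\bI^0 = SO(2m+2)$, i.e.\ $\bI = O(2m+2)$.

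Finally, the component group. In the non-round case $\bI^0 = U(m+1)$, and one must check whether there is an orientation-type extra isometry. Complex conjugation $\nu$ on $\C^{m+1}$ restricts to an isometry of $S^{2m+1}$ that normalizes $U(m+1)$ with $\Ad(\nu)$ acting as conjugation, and it preserves (\ref{metric}) because it fixes the Hopf fibration (preserving the splitting $\gv\oplus\gz_K$, acting orthogonally on each, albeit reversing the orientation of the fiber direction and of $\gv$). So $\bI(M,ds^2) \supseteq U(m+1)\cup \nu U(m+1)$. For the reverse inclusion, any isometry must preserve the unique $G$-invariant circle fibration $S^{2m+1}\to\C P^m$ (it is characterized metrically as the fibers of the Riemannian submersion realizing the two distinct eigenvalues of the metric, once $t\ne 1$), hence descends to an isometry of $\C P^m$ with its Fubini–Study metric, whose isometry group is $PU(m+1)\rtimes\Z_2$ with the $\Z_2$ generated by conjugation; lifting back and matching the circle action forces the isometry into $U(m+1)\cup\nu U(m+1)$.

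The main obstacle is the isometry-group identification, specifically ruling out hidden extra symmetries in the non-round case: one needs either to invoke the classification of transitive actions on spheres to control $\bI^0$, or to argue directly that the Riemannian submersion $S^{2m+1}\to\C P^m$ is metrically intrinsic (so every isometry preserves it) — the latter requires knowing that the horizontal and vertical distributions are the distinct eigenspaces of some naturally defined operator, which is exactly where $t\ne 1$ (the non-round hypothesis) is used. Everything else — the Schur-lemma classification of the metrics, and the verification that $\nu$ is an isometry — is routine.
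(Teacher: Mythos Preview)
Your argument is essentially correct but takes a different route from the paper.  The paper's proof is a two--line maximality argument: it first checks (exactly as you do, via the isotropy representation) that $U(m+1)\cup\nu U(m+1)\subset \mathbf I(M,ds^2)$, and then simply invokes the fact that $U(m+1)$ is a \emph{maximal} connected subgroup of $SO(2m+2)$.  That maximality immediately forces $\mathbf I^0(M,ds^2)$ to be either $U(m+1)$ or $SO(2m+2)$, with no need to compute curvatures or to go through the Montgomery--Samelson list, and no separate argument for the component group.

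Your approach through the classification of transitive sphere actions, followed by the curvature check and the Riemannian--submersion argument for the full component group, reaches the same conclusion with more machinery.  It has the virtue of being more explicit about \emph{why} the non--round metric has no extra isometries (the fibration is metrically intrinsic once $b'\ne b''$), whereas the paper's maximality argument is slicker but leaves that geometric content implicit.  One small caution in your version: when you list the transitive groups on $S^{2m+1}$ containing $SU(m+1)$, the case $m=3$ also admits $Spin(7)\supset Spin(6)\cong SU(4)$; this is harmless because $Spin(7)$ does \emph{not} contain $U(4)$, and you subsequently establish $U(m+1)\subset\mathbf I^0$, but the intermediate claim as stated is not quite right.
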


\begin{proof}
The isotropy subgroup of $U(m+1) \cup\nu U(m+1)$ is $U(m) \cup\nu U(m)$,
where $U(m)$ consists of all $(m+1)\times (m+1)$ unitary matrices of the form
$\left ( \begin{smallmatrix} k & 0 \\ 0 & 1 \end{smallmatrix}
\right )$ and $\nu$ gives complex conjugation on $U(m+1)$.  The isotropy
representation is the usual action of
$U(m)$ on $\gv\cong \C^m$ together with complex conjugation from $\nu$,
and is trivial on $\gz_K$\,.  That preserves any
$\Ad(H)$--invariant real inner product on $\gv + \gz_K$\,.
Thus $[U(m+1)\cup \nu U(m+1)] \subset \mathbf{I}(M,ds^2) \subset O(2m+2)$.
As $U(m+1)$ is a maximal connected subgroup of $SO(2m+2)$ it follows that
either $[U(m+1)\cup \nu U(m+1)] = \mathbf{I}(M,ds^2)$ or
$\mathbf{I}(M,ds^2) = O(2m+2)$.
\end{proof}

If $\mathbf{I}(M,ds^2) = O(2m+2)$, so $(M,ds^2)$ is the constant
curvature $(2m-1)$--sphere, I proved the Homogeneity Conjecture
some time ago \cite{W1961}.  

We now assume that 
$\mathbf{I}(M,ds^2) = [U(m+1)\cup \nu U(m+1)]$ and view $S^{2m-1}$
as the coset space $U(m+1)/U(m)$.  It will be convenient to use the
notation $\tilde G = U(m+1)$, $\tilde H = U(m)$ and
$\tilde K = U(m) \times U(1)$.

In $(m+1)\times (m+1)$ complex matrices, $\tilde K$ consists of all
$\left ( \begin{smallmatrix} k & 0 \\ 0 & \ell \end{smallmatrix}
\right )$ with $k^{-1} = k^* \in U(m)$ and $\ell \in U(1)$. $\tilde H$ 
is the subgroup $\ell = 1$.
Use diagonal matrices for Cartan subalgebras $\tilde \gt$ of $\tilde \gg$ and 
$\tilde \gk$.  Then $\tilde \gt = \tilde \gt' + \tilde \gt''$ where
$\tilde \gt' \subset U(m)$ and $\tilde \gt'' = \gu(1)$.  Using
$\varepsilon_j(\diag\{a_1,\dots , a_{m+1}\})
= a_j$\,,  The simple roots of $\tilde \gg$ are the 
$\psi_i = \varepsilon_i - \varepsilon_{i+1}$.  Let $E_{i,j}$ denote the 
matrix with $1$ in row $i$ 
and column $j$ and $0$ elsewhere.  It spans the $\varepsilon_i -
\varepsilon_j$ root space when $i\ne j$, and $\tilde \gt$ consists of the
$\sum a_iE_{i,i,}$.  Now
$$
\gv = \sum_{j=1}^m \Bigl ( (E_{j,m+1} - E_{m+1,j})\R +
	\sqrt{-1}\,(E_{j,m+1} + E_{m+1,j})\R \Bigr )
$$ 
and $\tilde \gk$ has center
$\gz_{\tilde K} = \sqrt{-1}\,(E_{1,1} + \dots + E_{m,m})\R 
	+ \sqrt{-1}\, E_{m+1,m+1}\R\,$, and $\gk$ has center
$$
\gz_K = \sqrt{-1}\,\left ( (E_{1,1} + \dots + E_{m,m}) 
	-m \sqrt{-1}\, E_{m+1,m+1}\right )\R\,.
$$
The complex projective space 
$P^m(\C) = G/K$ is a symmetric space of rank $1$ so every element
of $\gv$ is $\Ad(K)$--conjugate to an element of 
$\ga = (E_{m,m+1} - E_{m+1,m})\R$.  Thus every element of $\gv + \gz_K$ is
$\Ad(H)$--conjugate to an element of $\ga + \gz_K$\,.  In other words, 
if $\eta \in \gv + \gz_K$ then we have
constants $a', a'' \in \R$ such that 
$$
\eta = a'(E_{m,m+1} - E_{m+1,m}) 
+ a'' \sqrt{-1}\, ((E_{1,1} + \dots + E_{m,m}) - mE_{m+1,m+1}).
$$

\begin{lemma}\label{id-comp}
If $\gamma \in U(m+1)$ has constant displacement $c$ 
on $S^{2m+1}$,  then $\gamma$ is central in $U(m+1)$.
\end{lemma}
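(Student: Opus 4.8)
The plan is to show that if $\gamma \in U(m+1)$ acts with constant displacement $c$ on $(S^{2m+1}, ds^2)$, then $\gamma$ must commute with all of $U(m+1)$, hence is a scalar. The key tool is Lemma \ref{s-conj}: the horizontal component of $\Ad(g)[\widetilde\sigma'(0)]$ has length $c$ for every $g \in \widetilde G = U(m+1)$, where $\widetilde\sigma$ is the horizontal lift of a minimizing geodesic from the base point $x_0$ to $\gamma(x_0)$. First I would diagonalize: write $\gamma = \diag\{e^{i\theta_1}, \dots, e^{i\theta_{m+1}}\}$ up to $U(m+1)$-conjugacy, which by Lemma \ref{conj} does not change the displacement. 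The displacement at $x_0 = 1H$ measures the $ds^2$-distance from $x_0$ to $\gamma(x_0)$; more usefully, using the transitivity built into Lemma \ref{s-conj}, the displacement function is determined by the single number $c$, and for each $g$ the "infinitesimal displacement direction" $\Ad(g)\eta$ (projected horizontally) always has the same $ds^2$-length $c$, where $\eta = \widetilde\sigma'(0) \in \gv + \gz_K$.

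The crucial point is that a minimizing geodesic from $x_0$ to $\gamma x_0$ has initial velocity $\eta$ which, up to $\Ad(\widetilde H)$, lies in $\ga + \gz_K$; so write $\eta = a'(E_{m,m+1} - E_{m+1,m}) + a'' \sqrt{-1}\,\bigl((E_{1,1} + \dots + E_{m,m}) - mE_{m+1,m+1}\bigr)$ as in the displayed formula preceding the Lemma. Now I would conjugate $\eta$ by a suitable permutation matrix (which lies in $U(m+1)$): swapping index $m$ with some index $j \leq m-1$ moves the "$\ga$-part" to the plane spanned by $E_{j,m+1} - E_{m+1,j}$, but leaves the $\gz_K$-part essentially unchanged (it only permutes the first $m$ diagonal slots, which are all equal). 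By Lemma \ref{s-conj} the horizontal projection of this conjugate still has length $c$. The difference between $\eta$ and its permuted conjugate, after taking horizontal components, must therefore have controlled norm — and by playing several such permutations against each other, together with conjugation by elements of $\widetilde K$ that rotate within $\gv$, one forces the geometry to be so constrained that $\eta$ can only be the zero vector or the displacement can only be constant if $\gamma$ centralizes enough of $U(m+1)$ to be scalar.

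More concretely: the condition "$\|\,$horizontal part of $\Ad(g)\eta\,\| = c$ for all $g \in U(m+1)$" is extremely rigid. I would argue that if $\gamma$ is not central, then at least two of the $\theta_j$ differ, and then one can choose $g$ (a transposition, or a product of a transposition with a diagonal phase) so that $\gamma$ and $g^{-1}\gamma g$ differ; but $g^{-1}\gamma g$ is also of constant displacement $c$ by Lemma \ref{conj}, and the minimizing geodesic to its translate of $x_0$ has an initial velocity that is a nontrivial $\Ad(\widetilde H)$-modification of $\eta$. Matching lengths under the (non-round, so not $O(2m+2)$-invariant) metric $ds^2$ — which, per Proposition \ref{su2u}, genuinely distinguishes the $\gv$-direction from the $\gz_K$-direction with weights $a', a''$ that are not equal — forces the $\gz_K$-component $a''$ to vanish, reducing $\gamma$ to lie (up to conjugacy and the center) in $SU(m+1)$ acting on the symmetric space $P^m(\C)$; there constant displacement of a non-central element is impossible since $P^m(\C)$ is irreducible of rank one and no non-central isometry in the identity component is Clifford--Wolf (this is the positive-curvature rank-one case already handled). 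Tracking back through the circle fibration $S^{2m+1} \to P^m(\C)$ then yields that $\gamma$ is central in $U(m+1)$.

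The main obstacle I anticipate is the bookkeeping in the step where one compares $\eta$ with its permuted conjugates under the metric $ds^2$: one must carefully compute horizontal projections (the horizontal space for the Levi--Civit\`a connection of $ds^2$, not the naive orthogonal complement for a bi-invariant metric), and keep separate track of the $\ga$-weight $a'$ and the $\gz_K$-weight $a''$, showing that the only way all these lengths can coincide is $a'' = 0$ and then $a'$ is so constrained (the conjugates of a single root vector generating enough of $\gv$) that $\gamma$ centralizes $U(m)$, and combined with centralizing the $U(1)$ factor, $\gamma$ is scalar. The matrix computations are elementary but need care, exactly the "elementary matrix methods" advertised in the introduction for Section \ref{sec4}.
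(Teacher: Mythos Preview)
Your overall framework is the paper's framework --- use Lemmas \ref{conj} and \ref{s-conj} to compare the $ds^2$--length of $\eta$ with the $ds^2$--length of the horizontal component of $\Ad(g)\eta$ for well--chosen $g$ --- but the specific conjugations you propose yield no information, and the crucial step is missing.

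The permutations you suggest, swapping index $m$ with some $j \leqq m-1$, lie in $U(m) = \tilde H$. Conjugation by $\tilde H$ preserves the horizontal subspace $\gv + \gz_K$ and the $ds^2$--inner product on it, so the horizontal length of $\Ad(g)\eta$ is automatically $c$; nothing is learned. The same applies to your ``elements of $\tilde K$ that rotate within $\gv$'': $\tilde K = U(m)\times U(1)$ also preserves $\gv$, $\gz_K$, and $ds^2$ separately. The paper instead takes $g$ to be the Weyl group element exchanging $E_{1,1}$ with $E_{m+1,m+1}$, which lies \emph{outside} $\tilde K$. This moves both the $\ga$--part $a'(E_{m,m+1}-E_{m+1,m})$ and most of the $\gz_K$--part into $\gh$, leaving a horizontal component consisting solely of a small $\gz_K$--piece of square length $b''|a''|^2\,\tfrac{m^2+m}{m^2}$. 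Equating this with $\|\eta\|^2 = 2b'|a'|^2 + b''|a''|^2(m+m^2)$ forces $a' = 0$ and then $m^2 = 1$, contradicting $m \geqq 2$.

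Note also that your predicted conclusion is backwards: the calculation forces $a' = 0$, not $a'' = 0$. Your fallback via $P^m(\C)$ is therefore not the right endgame, and in any case the implication ``$a'' = 0$ for the initial velocity of one geodesic $\Rightarrow$ $\gamma$ has constant displacement on $P^m(\C)$'' is not justified: $\eta$ is a tangent vector determined by one geodesic segment, not an invariant of $\gamma$ that descends to the base. The paper closes the argument directly from the contradiction $m^2 = 1$, without passing to $P^m(\C)$, after separately disposing of the normal metric $b' = b''$ by citing \cite{W2018} and \cite[Proposition 3.2]{W2020}.
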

\begin{proof}
Suppose that $\gamma$ is not central in $U(m+1)$.  Take
a minimizing geodesic $\{t \mapsto \sigma(t)\}$ from $x_0 = 1H$ to 
$\gamma x_0$\,, parameterized proportional to arc length, such that  
$\sigma(0) = x_0$ and $\sigma(1) = \gamma x_0$\,. Then
$\sigma'(t) = \tfrac{d}{dt} \sigma(t)$, has constant length $c$ for
$0 \leqq t \leqq 1$.  Let $\eta = \sigma'(0)$. Using Lemmas \ref{conj} 
and \ref{s-conj} we replace
$\gamma$ by a conjugate and assume $\eta \in \ga + \gz_K$\,,  say
$\eta = \eta' +  \eta''$.  Let $\kappa$ denote the negative
multiple of the Killing form such that $\kappa(\nu,\mu) = 
- \Re(\tr (\nu\overline{\mu}))$ on $(m+1)\times (m+1)$ matrices.
Using $\Ad(H)$--invariance,  the metric satisfies 
\begin{equation}\label{metric}
ds^2|_{\gv} = b'\kappa|_{\gv}\,,\,\, ds^2|_{\gz_K} = b''\kappa|_{\gz_K}\,,
\,\,\text{ and } \,\,ds^2(\gt',\gz_K) = 0
\end{equation}
for some $b', b'' > 0$.
The displacement satisfies
$c^2 = b'\kappa(\eta',\eta') + b''\kappa(\eta'',\eta'')$.

The normal Riemannian metric is the case $b' = b''$.  There
it is known, from \cite{W2018} and \cite[Proposition 3.2]{W2020}, that
$\gamma$ is central in $G$.  We now assume $b' \ne b''$ and argue more or 
less as in the paragraph leading to Proposition \ref{pmg}.  From the
discussion above, 
$$
||\eta'||^2 = 2b'|a'|^2 \text{ and } ||\eta''||^2 = b''|a''|^2 (m+m^2)
	\text{ so } ||\eta||^2 = 2b'|a'|^2 + b''|a''|^2 (m+m^2).
$$
The Weyl group of $G$ acts by all permutations of the $E_{j,j}$ so we have
$g \in G$ that exchanges $E_{1,1}$ with $E_{m+1,m+1}$ and leaves fixed
the other $E_{j,j}$.  Let $\zeta = \Ad(g)\eta$.  Then 
$$
\begin{aligned}
\zeta &= a'(E_{m,1} - E_{1,m}) + a'' \sqrt{-1}\, 
	((E_{m+1,m+1} + E_{2,2} + \dots + E_{m,m}) - mE_{1,1})\\
&= a'(E_{m,1} - E_{1,m}) + a'' \sqrt{-1}\,
	((E_{1,1} + E_{2,2} + \dots + E_{m,m}) -mE_{m+1,m+1}) \\
	& \phantom{X} - a'' \sqrt{-1}\,(m+1)(E_{1,1} - E_{m+1,m+1}).
\end{aligned}
$$
Split $E_{1,1} - E_{m+1,m+1} 
   = \tfrac{1}{m}[(E_{1,1} + \dots + E_{m,m})-mE_{m+1,m+1}]
   + [E_{1,1} - \tfrac{1}{m}(E_{1,1} + \dots + E_{m,m})]$.
It belongs to $\gk = \gh + \gz_K$\,. 
Combining two terms using $1-\tfrac{m+1}{m} = -\tfrac{1}{m}$,
\begin{alignat*}{2}
\zeta &= a'(E_{m,1} - E_{1,m}) - a'' \sqrt{-1}\,(m+1)
		[E_{1,1} - \tfrac{1}{m}(E_{1,1} + \dots + E_{m,m})]
	&\qquad &\gh \text{ component, } \\
&- a'' \sqrt{-1}\, \tfrac{1}{m}([(E_{1,1} + E_{2,2} + \dots + E_{m,m}) 
		- mE_{m+1,m+1}] 
	&\qquad &\gz_K \text{ component. } 
\end{alignat*}
Thus the horizontal component of $\zeta$ is its $\gz_K$--component,
and that has square length $b''|a''|^2\,(\tfrac{m^2 + m}{m^2})$.
Comparing this with $||\eta||^2 = 2b'|a'|^2 + b''|a''|^2 (m+m^2)$
we have $a' = 0$ and $\tfrac{m^2 + m}{m^2} = (m+m^2)$.  So
$m^2 = 1$, which contradicts $m \geqq 2$.  That in turn contradicts
our assumption that $\gamma$ is not central in $U(m+1)$,  completing
the proof of Lemma \ref{id-comp}.
\end{proof}

\begin{lemma}\label{if-nu}
Let $\Gamma \subset \mathbf{I}(M,ds^2)$ be a subgroup such that every
$\gamma \in \Gamma$ is an isometry of constant displacement.  If
$\gamma = \nu g \in \Gamma\cap \nu U(m+1)$ then $m+1$ is even,
$\gamma^2 = -I \in U(m+1)$, and $\Gamma$ is $SU(m+1)$--conjugate to the 
binary dihedral group whose centralizer in $U(m+1)$ is $Sp(\tfrac{m+1}{2})$.
\end{lemma}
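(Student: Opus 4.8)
The plan is to analyze an element $\gamma = \nu g \in \Gamma \cap \nu U(m+1)$ by looking at $\gamma^2$, which lies in $U(m+1)$ and is therefore controlled by Lemma \ref{id-comp}. First I would compute $\gamma^2 = \nu g \nu g = (\nu g \nu^{-1}) g = \bar g g$, using $\Ad(\nu)$ = complex conjugation and $\nu^2 = I$ (or $\nu^2$ central, which I would pin down from the structure of $\mathbf{I}(M,ds^2)$ in Proposition \ref{su2u}). Since $\Gamma$ is a group, $\gamma^2 \in \Gamma$, so $\gamma^2$ is an isometry of constant displacement in $U(m+1)$; by Lemma \ref{id-comp} it is central, i.e. $\bar g g = \lambda I$ for some $\lambda \in U(1)$. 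Taking determinants and conjugates gives $\overline{\lambda} = \lambda$ after a short computation, so $\lambda = \pm 1$. The case $\lambda = 1$, i.e. $\bar g = g^{-1}$, must be excluded: there $g$ is a \emph{real} orthogonal matrix up to the $U(1)$-ambiguity, and I would argue that then $\gamma = \nu g$ is conjugate into $O(2m+2)$ fixing a point or, more to the point, that $\langle \gamma \rangle$ together with the rest of $\Gamma$ fails the constant-displacement/geometric setup unless it is already covered; alternatively, and more cleanly, I would show directly that $\bar g g = I$ forces $\gamma$ to be conjugate to $\nu$ itself, whose displacement behavior I can analyze — but the cleanest route is to observe that $\gamma^2 = I$ would make $\Gamma$ contain an order-2 reflection-type element, and the geometry of the displacement geodesic (as in Lemma \ref{s-conj} and the argument of Lemma \ref{id-comp}) rules this out. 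So $\lambda = -1$, giving $\gamma^2 = -I$.

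\textbf{From $\gamma^2 = -I$ to the structure of $\Gamma$.} Once $\bar g g = -I$, I would first note this forces $m+1$ even: taking determinants, $\overline{\det g}\,\det g = |\det g|^2 = 1$ on the left but $(-1)^{m+1}$ on the right, so $m+1 \equiv 0 \bmod 2$. Write $n = \tfrac{m+1}{2}$. The relation $\bar g g = -I$ is exactly the condition that the real-linear map $J \colon v \mapsto g\bar v$ on $\C^{m+1} = \R^{2m+2}$ satisfies $J^2 = -I$ and is $\C$-antilinear, i.e. $g$ defines a quaternionic structure compatible with the Hermitian form; equivalently, $\gamma = \nu g$ acts on $\C^{m+1}$ as multiplication by the quaternion $j$ in a suitable identification $\C^{m+1} \cong \H^n$. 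Standard linear algebra (reduction of a skew form / quaternionic structure to normal form, using that all such structures on a fixed Hermitian space are $U(m+1)$-equivalent) then shows $g$ is $SU(m+1)$-conjugate — after absorbing a scalar, which one checks lies in $SU(m+1)$ when $n$ is chosen correctly — to the standard block matrix $\diag\{\left(\begin{smallmatrix} 0 & 1 \\ -1 & 0\end{smallmatrix}\right), \dots\}$. Hence $\gamma$ is $SU(m+1)$-conjugate to the standard $j$, and $\langle \gamma \rangle \cong \Z_4$.

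\textbf{Pinning down the full group and its centralizer.} It remains to handle the rest of $\Gamma$. By Lemma \ref{id-comp}, every element of $\Gamma \cap U(m+1)$ is central in $U(m+1)$, hence a scalar $\mu I$ with $\mu \in U(1)$; but a scalar is of constant displacement only for specific $\mu$, and combined with finiteness of $\Gamma$ (or with the Vincent-type computation: the displacement of $\mu I$ on the fibration $S^{2m+1} \to P^m(\C)$ pins down $\mu$) I expect $\Gamma \cap U(m+1) = \{\pm I\} = \langle \gamma^2\rangle$. If $\Gamma$ has another element $\nu h \in \nu U(m+1)$, then $(\nu g)^{-1}(\nu h) \in U(m+1) \cap \Gamma = \{\pm I\}$, so $\nu h = \pm \nu g$, i.e. $\Gamma = \langle \gamma \rangle \cong \Z_4$ — but the lemma claims the binary dihedral group, so I should allow for an additional generator coming from a second independent reflection-type element; I would produce it as the other natural quaternionic unit (multiplication by $i$, say, from the original $U(1)$-symmetry of the metric that survives), verify it has constant displacement, check the quaternion relations, and conclude $\Gamma$ is $SU(m+1)$-conjugate to the binary dihedral (generalized quaternion) group $\langle i, j\rangle$. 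Finally, the centralizer of this standard $\langle i, j\rangle \subset U(m+1)$ in $U(m+1)$ is precisely the group of $\C$-linear maps commuting with the quaternionic action, which is by definition $Sp(n) = Sp(\tfrac{m+1}{2})$. \textbf{The main obstacle} I anticipate is the bookkeeping in the case $\lambda = 1$ (showing $\bar g g = I$ cannot occur for a constant-displacement $\gamma$) and correctly identifying the \emph{second} generator so that one gets the full binary dihedral group rather than just $\Z_4$; the quaternionic normal-form step and the centralizer computation are routine once the relation $\gamma^2 = -I$ is in hand.
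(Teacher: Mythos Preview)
Your overall strategy---compute $\gamma^2 = \bar g g$, invoke Lemma \ref{id-comp} to see it is a scalar $\lambda I$, then argue $\lambda = -1$---matches the paper's, and your reality argument for $\lambda = \pm 1$ (via $g\bar g = g(\bar g g)g^{-1} = \lambda I$ together with $g\bar g = \overline{\bar g g} = \bar\lambda I$) is in fact cleaner than the paper's route, which first conjugates $\nu g'$ into $\nu B$ for a maximal torus $B\subset SO(m+1)$ using de Siebenthal's theorem and then reads off the rotation angles. Your determinant argument for $m+1$ even is also fine.

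There are, however, two genuine gaps.

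\textbf{Excluding $\lambda = +1$.} None of your three suggested arguments is complete, and this is the step you yourself flag as the main obstacle. The paper's argument is short and you are missing the key observation: if $\gamma^2 = I$ then $\gamma$ is a fixed-point-free isometry of order $2$ on $S^{2m+1}$ (fixed-point-free because $\Gamma$ acts freely). But $\gamma$ is in particular an isometry for the \emph{constant curvature} metric, i.e.\ an element of $O(2m+2)$, and the only fixed-point-free involution there is the antipodal map $-I$. Since $-I$ is central in $U(m+1)\cup\nu U(m+1)$ it lies in $U(m+1)$, not in $\nu U(m+1)$; contradiction. Your proposed alternatives (displacement-geodesic geometry, conjugating to $\nu$) do not produce this conclusion.

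\textbf{Structure of $\Gamma$.} Your expectation that $\Gamma_0 := \Gamma\cap U(m+1) = \{\pm I\}$ is wrong, and this is precisely why you cannot locate the binary dihedral group and start looking for an extra generator. Every scalar $\mu I$ is central in $U(m+1)$, hence centralized by a transitive group, hence of constant displacement; so $\Gamma_0$ can be \emph{any} finite cyclic group of roots of unity containing $-I = \gamma^2$. The computation you are missing is that $\gamma$ \emph{inverts} $\Gamma_0$: for $\gamma_0 = \mu I \in \Gamma_0$,
\[
\gamma\gamma_0\gamma^{-1} \;=\; \nu g(\mu I)g^{-1}\nu^{-1} \;=\; \nu(\mu I)\nu^{-1} \;=\; \bar\mu I \;=\; \gamma_0^{-1}.
\]
Together with $\gamma^2 = -I \in \Gamma_0$ this is exactly the presentation of a binary dihedral (dicyclic) group $\Gamma = \Gamma_0 \cup \gamma\Gamma_0$. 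There is no second generator to manufacture; $\Gamma_0$ already supplies it. The centralizer computation then goes as you say: scalars are centralized by all of $U(m+1)$, and the centralizer of $\gamma$ (equivalently of the quaternionic structure $J\colon v\mapsto g\bar v$) is $Sp(\tfrac{m+1}{2})$.
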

\begin{proof}
Let $\gamma \in \Gamma$ with $\gamma = \nu g$ and $g \in U(m+1)$.  
Suppose that $\gamma \ne 1$
Let $g = g'z$ with $g' \in SU(m+1)$ and $z$ central (thus scalar) 
in $U(m+1)$.  The centralizer of $\nu$ in $U(m+1)$ is the orthogonal
group $O(m+1)$.  Let $B$ denote the maximal torus in $SO(m+1)$
consisting of all
$$
\diag\{R(\theta_1), \dots , R(\theta_{m+1})\} \text{ 1f $m+1$ is even},\,\,
\diag\{R(\theta_1), \dots , R(\theta_m),1\} \text{ 1f $m+1$ is odd,}
$$
where $R(\theta) = \left ( \begin{smallmatrix} \cos(\theta) & \sin(\theta) \\
-\sin(\theta) & \cos(\theta) \end{smallmatrix} \right )$.
Following de Siebenthal \cite{dS1956}, $\nu g'$ is
$\Ad(SU(m+1))$--conjugate to an element  $\nu b \in \nu B$.  Proposition
\ref{su2u} says that $\gamma^2 \in U(m+1)$ is a scalar matrix,
say $\gamma^2 = cI$.  But $\gamma^2 = (\nu b z)^2 = 
\nu b \nu^{-1}\cdot \nu z \nu^{-1} \cdot bz = \overline{b} z^{-1} bz = 
\overline{b} b = b^2$, so $b^2 = cI$.  Define $\theta_i$ mod $2\pi$ by
$b = \diag\{R(\theta_1), \dots , R(\theta_{m+1})\}$.
Then $b^2 = 
\diag\{R(2\theta_1), \dots , R(2\theta_{m+1})\}$.  Since $b^2 = cI$ either
$c = +1$ and each $\theta_i = \pm \pi$ mod $2\pi$, or
$c = -1$ and each $\theta_i = \pm \pi/2$ mod $2\pi$.

If $c = 1$ then $\gamma^2 = 1$.  But $\gamma$ is also an isometry for
the constant curvature metric on $S^{2m+1}$, and there the only
fixed point free isometry of square $1$ is the antipodal map $-I$.
But $-I \notin \nu U(m+1)$ because it is central in 
$U(m+1)\cup \nu U(m+1)$; so we cannot have $c = 1$.  Thus $c = -1$,
in other words $\gamma^2 = b^2 = -I$.  In particular $m+1$ is even. 

Let $\Gamma_0 = \Gamma \cap U(m+1)$.  Then $\Gamma = \Gamma_0 \cup
\gamma\Gamma_0$\,, and the elements of $\Gamma_0$ are scalar matrices.
If $\gamma_0 \in \Gamma_0$ now $\gamma_0 = \gamma \gamma_0 \gamma^{-1}
= \ne g \gamma_0 g^{-1}\nu^{-1} = \nu \gamma_0 \nu^{-1} = \overline{\gamma_0}
= \gamma_0^{-1}$.  Thus $\Gamma$ is the binary dihedral group whose 
$U(m+1)$--centralizer is $Sp(\tfrac{m+1}{2})$.
\end{proof}

Combining Lemmas \ref{id-comp} and \ref{if-nu} with Proposition
\ref{su2u} we have

\begin{theorem}\label{conj4su}
Let $ds^2$ be an $SU(m+1)$--invariant Riemannian metric on $S^{2m+1}, 
m \geqq 2$.  Let $\Gamma$ be a finite group of isometries of constant
displacement on $S^{2m+1}$.  Then the centralizer of $\Gamma$ in
$\mathbf{I}(S^{2m+1}, ds^2)$ is transitive on $S^{2m+1}$, so the
Riemannian quotient manifold $\Gamma\backslash (SU(m+1), ds^2)$ is
homogeneous.  In other words, the Homogeneity Conjecture is valid for
$(S^{2m+1}, ds^2)$.
\end{theorem}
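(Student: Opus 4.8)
The plan is to read the theorem off from Proposition \ref{su2u} together with Lemmas \ref{id-comp} and \ref{if-nu}; once those are in hand, what remains is just bookkeeping about which subgroup of the isometry group ends up centralizing $\Gamma$, plus an appeal to the easy half of the Homogeneity Conjecture recalled in the Introduction.

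First I would invoke Proposition \ref{su2u} to split into two cases according to $\bI(M,ds^2)$. If $\bI(M,ds^2) = O(2m+2)$, then $(M,ds^2)$ is the round $(2m+1)$--sphere, and the Homogeneity Conjecture is already known there by \cite{W1961}; moreover any finite group of constant displacement isometries of the round sphere is centralized by a transitive subgroup, so there is nothing new to do. Hence I may assume $\bI(M,ds^2) = U(m+1)\cup \nu U(m+1)$ and that $ds^2$ is the metric (\ref{metric}).

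Next, set $\Gamma_0 = \Gamma \cap U(m+1)$, a subgroup of index one or two in $\Gamma$. If $\Gamma = \Gamma_0 \subset U(m+1)$, then Lemma \ref{id-comp} applies to each $\gamma \in \Gamma$: being of constant displacement, $\gamma$ is central in $U(m+1)$. Thus $\Gamma$ lies in the center of $U(m+1)$, so $U(m+1)$ centralizes $\Gamma$, and $U(m+1)$ is already transitive on $S^{2m+1}$. If instead there is some $\gamma = \nu g \in \Gamma \setminus U(m+1)$, then Lemma \ref{if-nu} gives that $m+1$ is even, $\gamma^2 = -I$, and that after an $SU(m+1)$--conjugation $\Gamma$ is the binary dihedral group whose centralizer in $U(m+1)$ is $Sp(\tfrac{m+1}{2})$. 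The one extra remark to record here is that $Sp(\tfrac{m+1}{2})$ acts transitively on $S^{4\cdot\frac{m+1}{2}-1} = S^{2m+1}$, and that $\tfrac{m+1}{2}\geqq 2$ since $m\geqq 2$ forces $m+1\geqq 4$; so again $\Gamma$ is centralized by a transitive subgroup of $\bI(M,ds^2)$.

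Finally, in each case the centralizer $Z$ of $\Gamma$ in $\bI(M,ds^2)$ is transitive on $S^{2m+1}$. As recalled in the Introduction, this forces $\Gamma\backslash(S^{2m+1},ds^2)$ to be homogeneous: $Z\Gamma/\Gamma$ acts transitively by isometries on the quotient. That is precisely the (easy) converse half of the Homogeneity Conjecture, so the proof is complete. The only genuinely technical inputs are Lemmas \ref{id-comp} and \ref{if-nu} themselves, resting respectively on a Weyl--group displacement computation and on de Siebenthal's description of the components of $O(m+1)$ inside $U(m+1)$; the present theorem is then a short assembly of those facts, and I do not anticipate any further obstacle — the only place demanding a moment's care is confirming the transitivity and centralizer claims in the binary dihedral case, which is exactly the content quoted from Lemma \ref{if-nu}.
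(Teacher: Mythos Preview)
Your proposal is correct and follows exactly the paper's approach: the paper's proof of this theorem is the single line ``Combining Lemmas \ref{id-comp} and \ref{if-nu} with Proposition \ref{su2u} we have,'' and you have simply written out that combination in detail. One tiny slip: the inequality $m+1\geqq 4$ does not follow from $m\geqq 2$ alone but from $m\geqq 2$ together with $m+1$ even (supplied by Lemma \ref{if-nu}); in any case this remark is unnecessary, since $Sp(k)$ is transitive on $S^{4k-1}$ for every $k\geqq 1$.
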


\medskip
\section{{\Large {\bf $Sp(m+1)/Sp(m) = S^{4m+3}$\,, $m \geqq 2$.}}}
\label{sec5}
\setcounter{equation}{0}
\bigskip

The first step here is to prove Proposition \ref{sp2o},
which is the analog of Proposition \ref{su2u}.

\begin{lemma}\label{trans}
Let $D$ be a compact connected Lie group acting transitively and
effectively on $S^{4m+3}\,, m \geqq 1$.  Suppose that $Sp(m+1)
\subset D$ but $SU(2m+2) \not\subset D$.  Then $D$ is one of
$Sp(m+1)\cdot Sp(1)$, $Sp(m+1)\cdot U(1)$, or $Sp(m+1)$. 
\end{lemma}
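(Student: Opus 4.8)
The plan is to reduce at once to the classification of the compact connected Lie groups that act transitively and effectively on a sphere (Montgomery--Samelson, Borel). For $S^{4m+3} = S^{2(2m+2)-1}$ that list consists of $SO(4m+4)$, $U(2m+2)$, $SU(2m+2)$, $Sp(m+1)$, $Sp(m+1)\cdot U(1)$ and $Sp(m+1)\cdot Sp(1)$, together with the sporadic possibilities $Spin(7)$ acting on $S^7$ (the case $m=1$) and $Spin(9)$ acting on $S^{15}$ (the case $m=3$); the group $G_2$ does not occur, since $S^6$ is not a sphere of the form $S^{4m+3}$. So $D$ must be one of these groups, and everything reduces to a finite check: decide which of them actually contain the prescribed copy of $Sp(m+1)$, and then impose the hypothesis $SU(2m+2)\not\subset D$.

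First I would dispose of the two sporadic spheres. For $m=1$ the group $Spin(7)$ does contain a copy of the standard $Sp(2)$, since $Sp(2)\cong Spin(5)\subset Spin(6)\cong SU(4)\subset Spin(7)$, and on restricting the $8$-dimensional spin representation this copy acts on $\R^8=\H^2$ in the standard way; but then $SU(4)=SU(2m+2)\subset Spin(7)=D$, contrary to hypothesis, so $Spin(7)$ is eliminated. For $m=3$ one has $\dim Spin(9)=36=\dim Sp(4)$ while $Spin(9)$ and $Sp(4)$ are not isomorphic, so no closed connected subgroup of $Spin(9)$ can be isomorphic to $Sp(4)$; hence $Spin(9)$ does not contain $Sp(m+1)$ and is eliminated as well.

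Among the six remaining candidates, $SU(2m+2)\subset U(2m+2)\subset SO(4m+4)$, so the hypothesis $SU(2m+2)\not\subset D$ rules out $SO(4m+4)$, $U(2m+2)$ and $SU(2m+2)$ itself. This leaves precisely $Sp(m+1)$, $Sp(m+1)\cdot U(1)$ and $Sp(m+1)\cdot Sp(1)$, each of which visibly contains the standard $Sp(m+1)$; and the dimension comparison $\dim\bigl(Sp(m+1)\cdot Sp(1)\bigr)=(m+1)(2m+3)+3<(2m+2)^2-1=\dim SU(2m+2)$, valid for every $m\geqq 1$, shows that none of the three contains $SU(2m+2)$, so each is genuinely allowed. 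This exhausts the list and proves the lemma.

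The main obstacle is organizational rather than computational: one needs the classification of transitive actions on spheres cleanly at hand, and one must be attentive to the low-dimensional spheres $S^7$ and $S^{15}$, since it is precisely at $S^7$ that the hypothesis $SU(2m+2)\not\subset D$ --- and not merely a dimension estimate --- is what removes $Spin(7)$. One further point worth pinning down is that, after conjugation in the isometry group of the round metric on $S^{4m+3}$, the subgroup $Sp(m+1)$ named in the hypothesis is the standard one, so that the case-by-case inspection of the list carried out above is indeed exhaustive.
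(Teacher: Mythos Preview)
Your argument follows the same overall route as the paper: invoke the Montgomery--Samelson/Borel classification of compact connected groups acting transitively and effectively on spheres, list the candidates for $S^{4m+3}$, and then eliminate using the two hypotheses.  The handling of $SO(4m+4)$, $U(2m+2)$, $SU(2m+2)$ and of $Spin(9)$ (by the dimension coincidence $\dim Spin(9)=36=\dim Sp(4)$) matches the paper exactly.

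The one real difference is your treatment of $Spin(7)$ when $m=1$.  The paper eliminates $Spin(7)$ by a dimension argument, in effect asserting that $Sp(m+1)\not\subset Spin(7)$; but as you correctly observe, $Sp(2)\cong Spin(5)\subset Spin(7)$, and the $8$--dimensional spin representation of $Spin(7)$ restricts to the standard action of $Sp(2)$ on $\H^2$, so that inclusion is genuine and the paper's dimension count (which invokes $Sp(3)$ rather than $Sp(2)$) is mistaken at this point.  Your argument---that the intermediate group $Spin(6)\cong SU(4)=SU(2m+2)$ then forces $SU(2m+2)\subset D$, contrary to hypothesis---is the correct way to dispose of this case, and your closing remark that the surviving three groups genuinely satisfy both hypotheses (via the dimension inequality) is extra care that the paper omits.
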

\begin{proof}
It is now classical from \cite{MS1943} and \cite{B1949} that the compact
connected Lie groups acting transitively on spheres are the linear groups
$SO(d)$,\, $U(d/2)$,\, $SU(d/2)$,\, $Sp(d/4)$,\, $Sp(d/4)\cdot U(1)$  and  
$Sp(d/4)\cdot Sp(1)$ on $S^{d-1}$;\, $G_2$ on $S^6$,\, $Spin(7)$ on $S^7$, 
and $Spin(9)$ on $S^{15}$.  So the cases of $S^{4m+3}$ are 
$SO(4m+4)$, $U(2m+2)$, $SU(2m+2)$, $Sp(m+1)\cdot Sp(1)$, $Sp(m+1)\cdot U(1)$,
$Sp(m+1)$ all for $m \geqq 1$, $Spin(7)$ for $m=1$ and $Spin(9)$ for
$m = 3$.  The restrictions $Sp(m+1) \subset D$ and
$SU(2m+2) \not\subset D$ eliminate $SO(4m+4)$, $U(2m+2)$, $SU(2m+2)$,
$Spin(7)$ and $Spin(9)$.  For the latter two, $\dim Spin(7) = 21 = \dim Sp(3)$,
so $Sp(3) \subset D = Spin(7)$ would imply $Sp(3) = Spin(7)$, which is false;
and similarly $\dim Spin(9) = 36 = \dim Sp(4)$, so
$Sp(4) \subset D = Spin(9)$ would imply $Sp(4) = Spin(9)$, which is false.
The lemma follows.
\end{proof}

Denote $G = Sp(m+1)$, $K = Sp(m)\times Sp(1)$ and $H = Sp(m) \subset K$,
where $m \geqq 1$.
Then $G/H = S^{4m+3}$and we have the projection $S^{4m+3} \to P^m(\H)
= G/K$.  $G/H$ has tangent space $\gv + \gw$ where $\gv$ is the
tangent space $\H^m$ of $G/K$ and $\gw$ is the tangent space $\Im \H$
of the fiber of $S^{4m+3} \to P^m(\H)$.  The isotropy representation
of $H$ is the natural representation of $Sp(m)$ on $\H^m = \gv$, and
on $\gw$ it is three copies of the trivial representation.

Let $\kappa' = \kappa|_\gv$ and $\kappa'' = \kappa|_\gw$ where 
$\kappa(\mu,\nu) = -\Re\tr(\mu\,\overline{\nu})$ with trace taken in $Sp(m+1)$.
Let $\{e_1,e_2,e_3\}$ be a $\kappa''$--orthonormal basis of $\gw$
and split $\kappa'' = \kappa_1 + \kappa_2 + \kappa_3$ accordingly.
Then
\begin{equation}\label{sp-metric}
ds^2|_\gv = b_0\kappa',\, ds^2|_\gw = b_1\kappa_1 + b_2\kappa_2 + 
	b_3\kappa_3\,,\, ds^2(\gv,\gw) = 0 \text{ and }
	ds^2(e_i,e_j) = 0 \text{ for } i\ne j
\end{equation}
for some positive numbers $b_0, b_1, b_2 \text{ and } b_3$\,.
The normal Riemannian metric on $G/H$ is the case where
$b_0 = b_1 = b_2 = b_3$ for a certain $b_0 > 0$.  We now normalize
$ds^2$ and assume $b_0 = 1$; this has no effect on $\mathbf{I}(M,ds^2)$
nor on which isometries have constant displacement.

\begin{proposition} \label{sp2o}
Let $ds^2$ be an $Sp(m+1)$--invariant Riemannian metric on $M = S^{4m+3}$,
$m \geqq 1$.
Then either $ds^2$ is invariant under $SU(2m+2)$, or 
$\mathbf{I}(M,ds^2) = Sp(m+1)\cdot L = (Sp(m+1)\times L)/\{\pm (I_{m+1},I_3)\}$
where $L$ is one of the following.

{\rm (1)} $L = Sp(1)$ acting on $Sp(m+1)/Sp(m)$ on the right.  $L$ acts
on the tangent space as multiplication by quaternion unit scalars
on $\gv$ and the adjoint representation of $Sp(1)$ on $\gw$.  This is
the case $b_1 = b_2 = b_3$\,.

{\rm (2)} $L = O(2) \times \Z_2$ acting on $Sp(m+1)/Sp(m)$ on the right.  
$L$ acts the tangent space as multiplication by an $O(2)\times \Z_2$ 
{\rm (}essentially circle{\rm )} group of 
quaternion unit scalars on $\gv$, $O(2)$--rotation on the $(e_1,e_2)$--plane 
in $\gw$, and the $\Z_2$--action $e_3 \mapsto \pm e_3$ on $\gw$\,.  
This is the case where two, but not all three, of the $b_i$ are equal,
for example where $b_1 = b_2 \ne b_3$\,.

{\rm (3)} $L = \Z_2^3$ acting on $Sp(m+1)/Sp(m)$ on the right.  $L$
acts on the tangent space by $\pm 1$ on $\gv$ and the $e_i \mapsto \pm e_i$
on $\gw$.  This is the case where $b_1$\,, $b_2$ and $b_3$ are all different.
\end{proposition}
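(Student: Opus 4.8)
The plan is to run the argument of Proposition~\ref{su2u}, with Lemma~\ref{trans} playing the role of the ``$U(m+1)$ is maximal in $SO(2m+2)$'' step there. Since $ds^2$ is $G=Sp(m+1)$--invariant, $Sp(m+1)$ acts effectively, transitively and isometrically on $S^{4m+3}$, with isotropy $H=Sp(m)$ acting on $T_{1H}M=\gv\oplus\gw$ by the natural representation on $\gv=\H^m$ and trivially on $\gw=\Im\H$. Hence $\mathbf{I}^0(M,ds^2)$ is a compact connected group acting effectively and transitively on $S^{4m+3}$ and containing $Sp(m+1)$. If $ds^2$ is $SU(2m+2)$--invariant we are in the first alternative; otherwise $SU(2m+2)\not\subseteq\mathbf{I}^0(M,ds^2)$, and Lemma~\ref{trans} forces $\mathbf{I}^0(M,ds^2)$ to be $Sp(m+1)\cdot Sp(1)$, $Sp(m+1)\cdot U(1)$, or $Sp(m+1)$.

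The main step is to pass from $\mathbf{I}^0$ to the full isometry group and pin it down in terms of $(b_1,b_2,b_3)$. Every isometry normalizes the identity component, hence normalizes its $Sp(m+1)$--factor, which is the unique normal subgroup of that isomorphism type (it cannot be interchanged with the $Sp(1)$ or $U(1)$ factor, as $m\geqq1$). Because $Sp(m+1)$ has only inner automorphisms, any isometry equals $\ell_a\circ g$ with $a\in Sp(m+1)$ and $g$ centralizing the left $Sp(m+1)$--action; such a $g$ commutes with a transitive group, so it is determined by $g(1H)$, which must lie in the fixed point set of $H$. But $\mathrm{Fix}(Sp(m))$ in $S^{4m+3}\subset\H^{m+1}$ is exactly the fibre $S^3$ of $S^{4m+3}\to P^m(\H)$ through $1H$, so $g$ is a right translation $R_q\colon x\mapsto xq$ by a unit quaternion. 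Composing $R_q$ with the left translation by $\diag(I_m,q^{-1})$ gives an isometry fixing $1H$ whose differential acts on $\gv$ by right multiplication by $q$ (always $\kappa'$--orthogonal) and on $\gw$ by $\Ad(q^{-1})\in SO(\gw,\kappa'')$. Therefore $R_q$ preserves $ds^2$ exactly when $\Ad(q^{-1})$ preserves $b_1\kappa_1+b_2\kappa_2+b_3\kappa_3$ in (\ref{sp-metric}), i.e.\ when the image of $q$ in $SO(3)$ stabilizes $\diag(b_1,b_2,b_3)$. Consequently $\mathbf{I}(M,ds^2)=Sp(m+1)\cdot L$ with $L$ the group of those right translations --- the preimage in $Sp(1)$ of $\mathrm{Stab}_{SO(3)}(\diag(b_1,b_2,b_3))$ --- and the relation $R_{-1}=\ell_{-I_{m+1}}$ (right multiplication by $-1$ is the antipodal map) accounts for the central identification $\{\pm(I_{m+1},I_3)\}$.

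It now remains to evaluate the $SO(3)$--stabilizer and lift it. It is all of $SO(3)$ when $b_1=b_2=b_3$, giving $L=Sp(1)$ and $\mathbf{I}^0=Sp(m+1)\cdot Sp(1)$ (case (1)); it is the $O(2)$ fixing the distinguished axis --- rotations about that axis together with the $\pi$--rotations about perpendicular axes --- when exactly two of the $b_i$ agree, giving the group of case (2) and $\mathbf{I}^0=Sp(m+1)\cdot U(1)$; and it is the Klein four--group of diagonal sign changes when the $b_i$ are pairwise distinct, giving the group of case (3) and $\mathbf{I}^0=Sp(m+1)$. The description of the $L$--action on $\gv$ (by unit quaternion scalars) and on $\gw$ (by the rotation group, respectively with the $e_3$--flip, respectively by $e_i\mapsto\pm e_i$) then reads off from the differential above. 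One also checks, against the Berger description of the $SU(2m+2)$--invariant metrics on $S^{4m+3}$, that the metrics in (1)--(3) are not $SU(2m+2)$--invariant --- invariance would force two of the $b_i$ to equal $b_0$ --- so the dichotomy in the statement is exhaustive.

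I expect the delicate point to be this passage from $\mathbf{I}^0$ to $\mathbf{I}$: making the normalization argument airtight and correctly identifying the disconnected part of $L$ in cases (2) and (3). The $SO(3)$--stabilizer computations and the computation of the differential of a right translation are routine.
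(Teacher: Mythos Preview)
Your argument is correct and follows the same overall strategy as the paper: invoke Lemma~\ref{trans} to pin down $\mathbf{I}^0(M,ds^2)$, then determine the full isometry group case by case according to the coincidences among $b_1,b_2,b_3$. The paper's proof is considerably terser at the second step---it simply declares that $\mathbf{I}(M,ds^2)=Sp(m+1)\cdot L$ with $Sp(m)\cdot L$ the normalizer of the isotropy $Sp(m)$ inside the orthogonal group of $ds^2|_{\gv+\gw}$, and then reads off $L$ in each case without further justification.

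Your route through the centralizer of $Sp(m+1)$ is a genuine (and welcome) elaboration: the observation that any diffeomorphism commuting with the transitive left $Sp(m+1)$--action is forced to be a right scalar multiplication $R_q$, together with the explicit differential computation on $\gv\oplus\gw$, makes the passage from $\mathbf{I}^0$ to $\mathbf{I}$ airtight rather than asserted. Note that your $L$ is naturally realized inside $Sp(1)$ (as $Sp(1)$, $N_{Sp(1)}(U(1))$, or $Q_8$), whereas the paper's $L$ is described via its action on $\gw$ (as $Sp(1)$, $O(2)\times\Z_2$, or $\Z_2^3$); these presentations differ, but after the central identification they yield the same $\mathbf{I}(M,ds^2)$, so there is no conflict.
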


\begin{proof}
Suppose that $ds^2$ is not invariant under $SU(2m+2)$.  Then 
Lemma \ref{trans} shows that the identity component 
$\mathbf{I}^0(M,ds^2)$ of the isometry group must be 
$Sp(m+1)\cdot Sp(1)$, $Sp(m+1)\cdot U(1)$, or $Sp(m+1)$.
Thus $\mathbf{I}(M,ds^2) = Sp(m+1)\cdot L$ where $Sp(m) \cdot L$ is the
normalizer of the isotropy subgroup $Sp(m)$ of $\mathbf{I}^0(M,ds^2)$
in the orthogonal group of $ds^2|_{\gv + \gw}$ and also preserves $\kappa''$.

First consider the case
$\mathbf{I}^0(M,ds^2) = Sp(m+1)\cdot Sp(1)$.  That is the 
case $b_1 = b_2 = b_3$\,.  Then
$S^{4m+3} = [Sp(m+1)\cdot Sp(1)]/[Sp(m)\times Sp(1)]$ where
$Sp(m)$ acts on the summand $\gv$ of the tangent space, and $Sp(1)$
acts on $\gv$ as quaternion unit scalars and on $\gw$ by its adjoint
representation..  

Second consider the case 
$\mathbf{I}^0(M,ds^2) = Sp(m+1)\cdot SO(2)$.
This is the case where two of the $b_i$ are equal but different from
the third.  By conjugacy we may suppose $b_1 = b_2 \ne b_3$ here.  
Then $S^{4m+3} = [Sp(m+1)\cdot SO(2)]/[Sp(m)\times SO(2)]$, and
$L = O(2) \times \Z_2$\,, so
$\mathbf{I}(M,ds^2) = Sp(m+1) \cdot (O(2) \times \Z_2)$,
as asserted.

Third consider the case $\mathbf{I}^0(M,ds^2) = Sp(m+1)$.
This is the case where the $b_i$ are all different.
Then $S^{4m+3} = Sp(m+1)/Sp(m)$, $L = \Z_2^3$\,, and
$\mathbf{I}(M,ds^2) = Sp(m+1) \cdot \Z_2^3$\,,
as asserted.
\end{proof}

We write the quaternion algebra $\H$ as $\R + {\mathbf i}\R 
+ {\mathbf j}\R + {\mathbf k}\R$.
In $(m+1)\times (m+1)$ quaternion matrices, $K$ consists of all
$\left ( \begin{smallmatrix} k & 0 \\ 0 & \ell \end{smallmatrix}
\right )$ with $k\in Sp(m)$ and $\ell \in Sp(1)$. $H$
is the subgroup $\ell = 1$.  Use diagonal matrices with entries 
in ${\mathbf i}\R$ for Cartan subalgebras $\gt$ of $\gg$ and
$\gk$.  Then $\gt = \gt' + \gt''$ where
$\gt' \subset Sp(m)$ and $\gt'' = \gu(1) \subset \gs\gp(1)$.  Using
$\varepsilon_j(\diag\{a_1,\dots , a_{m+1}\})
= a_j$\,,  The simple roots of $\gg$ are the
$\psi_i = \varepsilon_i - \varepsilon_{i+1}\,, i \leqq m, \text{ and }
\psi_{m+1} = 2\varepsilon_{m+1}\,$.  Let $E_{i,j}$ denote the
matrix with $1$ in row $i$ and column $j$ and $0$ elsewhere, as usual,
so $\gt$ consists of the $\sum a_iE_{i,i}$ with each $a_i \in {\mathbf i}\R$. 
Now 
$$
\gv = \sum_{j=1}^m \Bigl ( (E_{j,m+1} - E_{m+1,j})\R +
        ({\mathbf i}\R + {\mathbf j}\R + {\mathbf k}\R)
	(E_{j,m+1} + E_{m+1,j}) \Bigr )
\text{ and }
\gw = ({\mathbf i}\R + {\mathbf j}\R + {\mathbf k}\R) E_{m+1,m+1}.
$$  
The quaternion projective space
$P^m(\H) = G/K$ is a symmetric space of rank $1$ so every element
of $\gv$ is $\Ad(K)$--conjugate to an element of
$\ga = (E_{m,m+1} - E_{m+1,m})\R$.  Thus every element of $\gv + \gw$ is
$\Ad(H)$--conjugate to an element of $\ga + \gw$\,.  In other words,
if $\tilde\eta \in \gv + \gw$ then we have 
constants $a', a_\ell \in \R$ such that $\tilde\eta$ is $\Ad(H)$--conjugate to
\begin{equation}\label{pieces}
\eta = \eta' + \eta'' \text{ with } \eta' 
	= a'(E_{m,m+1} - E_{m+1,m}) \text{ and } 
\eta'' = (a_1{\mathbf i} + a_2{\mathbf j} + a_3{\mathbf k})E_{m+1,m+1}
	= \eta_1 + \eta_2 + \eta_3\,.
\end{equation}

\begin{lemma}\label{sp-comp}
If $\gamma \in Sp(m+1)$ has constant displacement $c > 0$
on $S^{4m+3}$, $m \geqq 2$, then $\gamma$ belongs to the centralizer
of $Sp(m+1)$ in $\mathbf{I}(S^{4m+3},ds^2)$.
\end{lemma}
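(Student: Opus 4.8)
The plan is to adapt the argument used for Lemma \ref{id-comp}, replacing the Weyl-group and curvature gymnastics of type $A_m$ with the corresponding ones for type $C_{m+1}$, but now keeping careful track of the three quaternionic directions $e_1,e_2,e_3$ in $\gw$. Suppose $\gamma\in Sp(m+1)$ has constant displacement $c>0$ and is \emph{not} in the centralizer of $Sp(m+1)$; I want a contradiction. Take a minimizing geodesic $\sigma$ from $x_0=1H$ to $\gamma x_0$, parameterized proportionally to arc length, and set $\eta=\widetilde\sigma{}'(0)$, the initial horizontal velocity. Using Lemmas \ref{conj} and \ref{s-conj} I may replace $\gamma$ by an $\Ad(Sp(m+1))$-conjugate so that $\eta$ lies in $\ga+\gw$, i.e. $\eta=\eta'+\eta''$ as in (\ref{pieces}), with $\eta'=a'(E_{m,m+1}-E_{m+1,m})$ and $\eta''=(a_1{\mathbf i}+a_2{\mathbf j}+a_3{\mathbf k})E_{m+1,m+1}$. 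Then (\ref{sp-metric}) gives
\begin{equation}\label{proposaleta}
c^2 = \|\eta\|^2 = 2|a'|^2 + 2(b_1|a_1|^2 + b_2|a_2|^2 + b_3|a_3|^2),
\end{equation}
where the factors of $2$ come from $\kappa$ evaluated on $E_{j,m+1}\pm E_{m+1,j}$ and on ${\mathbf i}E_{m+1,m+1}$, etc.

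Next I apply a Weyl-group element of $Sp(m+1)$. The Weyl group of $C_{m+1}$ permutes the $\varepsilon_j$ and changes their signs, so in particular there is $g\in Sp(m+1)$ with $\Ad(g)$ interchanging $E_{1,1}\leftrightarrow E_{m+1,m+1}$ (as a sign-change-free permutation, realized by a permutation matrix), fixing the other $E_{j,j}$. Put $\zeta=\Ad(g)\eta$. The $\ga$-part $a'(E_{m,m+1}-E_{m+1,m})$ becomes $a'(E_{m,1}-E_{1,m})$, which now lies in the $\gv$-summand attached to index $1$ rather than to the fiber; and the diagonal part $(a_1{\mathbf i}+a_2{\mathbf j}+a_3{\mathbf k})E_{m+1,m+1}$ becomes $(a_1{\mathbf i}+a_2{\mathbf j}+a_3{\mathbf k})E_{1,1}$, which sits in a non-fiber diagonal slot. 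I then decompose $\zeta$ into its $\gh$-component, its $\gv$-component, and its $\gw=\gz$-component exactly as in the proof of Lemma \ref{id-comp}: the imaginary-quaternion entry $(a_1{\mathbf i}+a_2{\mathbf j}+a_3{\mathbf k})E_{1,1}$ is split using $E_{1,1}=\tfrac1m[(E_{1,1}+\dots+E_{m,m})]-[\tfrac1m(E_{1,1}+\dots+E_{m,m})-E_{1,1}]$, where the first bracket (scaled) lands in $\gz_K$ after correcting for the $E_{m+1,m+1}$-coefficient, and the remainder lies in $\gh$. The point is that the horizontal component of $\zeta$ — which by Lemma \ref{s-conj} must again have square length $c^2$ — now has its $\gw$-part scaled by the factor $\tfrac1{m}$ (or $\tfrac{1}{m^2}$ after squaring, up to the same combinatorial constant that appeared before) relative to $\eta''$, while the $\ga$-contribution $a'(E_{m,1}-E_{1,m})$ becomes a genuine horizontal $\gv$-vector of the \emph{same} length $2|a'|^2$.

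Comparing $\|\zeta_{\mathrm{hor}}\|^2$ with (\ref{proposaleta}) then forces, on the one hand, $2|a'|^2$ on both sides to match (so no constraint from $\gv$), and on the other hand
\begin{equation}\label{proposalcontra}
2(b_1|a_1|^2 + b_2|a_2|^2 + b_3|a_3|^2) = \tfrac{1}{m^2}\cdot 2(m+m^2)\,(b_1|a_1|^2 + b_2|a_2|^2 + b_3|a_3|^2),
\end{equation}
hence either all $a_i=0$ or $\tfrac{m+m^2}{m^2}=1$, i.e. $m^2=1$, contradicting $m\geqq2$. If all the $a_i$ vanish then $\eta=\eta'\in\ga$, so the initial velocity of $\sigma$ is horizontal for the fibration $S^{4m+3}\to P^m(\H)$, and a standard argument (as in the paragraph preceding Proposition \ref{pmg}, using that $P^m(\H)$ has rank one so $\Ad(Sp(m+1))\gamma x_0$ meets $\sigma$ orthogonally halfway to the cut locus, with each isotropy orbit of positive dimension) shows $\gamma$ must centralize $Sp(m+1)$ after all. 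That is the desired contradiction.

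The main obstacle I anticipate is bookkeeping the three quaternionic directions $e_1,e_2,e_3$ simultaneously rather than a single central direction: one has to check that the Weyl element $g$ can be chosen in $Sp(m+1)$ (a permutation matrix, not requiring a sign change, so that it genuinely conjugates $E_{1,1}$ to $E_{m+1,m+1}$) and that $\Ad(g)$ acts as the identity on the imaginary-quaternion coefficients $(a_1,a_2,a_3)$ — i.e. it does not mix ${\mathbf i},{\mathbf j},{\mathbf k}$ — so that the combination $b_1|a_1|^2+b_2|a_2|^2+b_3|a_3|^2$ is carried along intact and can be cancelled from both sides of (\ref{proposalcontra}). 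A secondary subtlety, as in Lemma \ref{id-comp}, is handling the normal-metric case $b_0=b_1=b_2=b_3$ separately by quoting \cite{W2018} and \cite[Proposition 3.2]{W2020}; the argument above is for the non-normal case, and the restriction $m\geqq2$ is exactly what makes $m^2=1$ impossible.
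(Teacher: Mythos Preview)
Your decomposition of $\zeta=\Ad(g)\eta$ is wrong, and this is the crux. In the $Sp$ case the tangent space splits as $\gh\oplus\gv\oplus\gw$ where $\gh=\gs\gp(m)$ is the upper-left $m\times m$ block, $\gv$ consists of the off-diagonal entries involving index $m{+}1$, and $\gw=\Im\H\cdot E_{m+1,m+1}$ sits \emph{entirely} in the single $(m{+}1,m{+}1)$ slot. After the Weyl swap $1\leftrightarrow m{+}1$ you have
\[
\zeta=a'(E_{m,1}-E_{1,m})+(a_1\mathbf{i}+a_2\mathbf{j}+a_3\mathbf{k})E_{1,1}.
\]
Since $m\geqq 2$, both indices $1$ and $m$ lie in $\{1,\dots,m\}$, so $a'(E_{m,1}-E_{1,m})\in\gs\gp(m)=\gh$; it is \emph{not} in $\gv$ (a $\gv$-vector must carry the index $m{+}1$). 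Likewise $(a_1\mathbf{i}+a_2\mathbf{j}+a_3\mathbf{k})E_{1,1}\in\gh$, since the $(1,1)$ slot is inside the $Sp(m)$ block; there is no need for the $E_{1,1}=\tfrac1m(\cdots)$ splitting you imported from the $SU$ argument, because here $\gw$ is not a trace-type center spread over the whole diagonal. Thus $\zeta\in\gh$, its horizontal component vanishes, and Lemma~\ref{s-conj} gives $c=0$ immediately --- a contradiction in one line. This is exactly the paper's proof.

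Your version instead asserts that the $\gv$-part of $\zeta$ has length $2|a'|^2$ and that the $\gw$-part picks up a factor $1/m^2$; both claims are false here, so equation (\ref{proposalcontra}) never arises, and the subsequent ``all $a_i=0$'' fallback (with its appeal to a rank-one cut-locus argument) is both unnecessary and not adequately justified. The moral is that the $Sp$ case is \emph{easier} than the $SU$ case, not harder: the fiber direction $\gw$ is localized at a single diagonal entry, so one Weyl reflection kills the entire horizontal part.
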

\begin{proof}
Suppose that $\gamma$ does not centralize $Sp(m+1)$.  Take
a minimizing geodesic $\{t \mapsto \sigma(t)\}$ from $x_0 = 1H$ to
$\gamma x_0$\,, parameterized proportional to arc length, such that
$\sigma(0) = x_0$ and $\sigma(1) = \gamma x_0$\,. Then
$\sigma'(t) = \tfrac{d}{dt} \sigma(t)$, has constant length $c$ for
$0 \leqq t \leqq 1$.  Let $\eta = \sigma'(0)$. Using Lemmas \ref{conj}
and \ref{s-conj} we replace
$\gamma$ by a conjugate and assume $\eta = \eta' +  \eta''$ as in 
(\ref{pieces}).  Recall the expression (\ref{sp-metric}) for $ds^2$.
Then the displacement satisfies
\begin{equation}\label{sp-disp}
c^2 = b'\kappa(\eta',\eta') + b_1\kappa(\eta_1,\eta_1)
	+ b_2\kappa(\eta_2,\eta_2) + b_3\kappa(\eta_3,\eta_3)
= 2b'a'^2 + b_1a_1^2 + b_2a_2^2 + b_3a_3^2\,.
\end{equation}

The Weyl group of $G$ contains all permutations of the ${\mathbf i}E_{j,j}$\,, 
including a conjugation by $g \in Sp(m+1)$ that exchanges
${\mathbf i}E_{1,1}$ with ${\mathbf i}E_{m+1,m+1}$  and leaves fixed the 
other ${\mathbf i}E_{i,i}$.  Let $\zeta = \Ad(g)\eta$, suppose $m > 1$,
and compute
$
\zeta = a'(E_{m,1} - E_{1,m})
	+ (a_1{\mathbf i} + a_2{\mathbf j} + a_3{\mathbf k})E_{1,1}\,.
$
Thus $m > 1$ implies $a'(E_{m,1} - E_{1,m}) \in \gh$ and
$(a_1{\mathbf i} + a_2{\mathbf j} + a_3{\mathbf k})E_{1,1} \in \gh$ as well,
so $c = 0$ and $\gamma = 1$.
\end{proof}

\begin{lemma}\label{spm-comp}
Let $\Gamma \subset {\mathbf I}(M,ds^2)$ be a subgroup such that every
$\gamma \in \Gamma$ is an isometry of constant displacement.  Suppose
$m \geqq 2$ and that $ds^2$ is not $SU(2m+2)$--invariant.  Then $\Gamma$
centralizes $Sp(m+1)$ in ${\mathbf I}(M,ds^2)$.
\end{lemma}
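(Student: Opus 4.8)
The plan is to show that the argument establishing Lemma~\ref{sp-comp} applies to an arbitrary constant displacement isometry of $(M,ds^2)$, once one records how such an isometry decomposes under Proposition~\ref{sp2o}. Let $\gamma\in\Gamma$, of constant displacement $c$. If $c=0$ then $\gamma$ is the identity and there is nothing to prove, so assume $c>0$. By Proposition~\ref{sp2o} write $\gamma=g\,\ell$ with $g\in Sp(m+1)$ and $\ell\in L$. The structural fact I would use is that in each of the three cases $L$ acts on $Sp(m+1)/Sp(m)$ on the right, hence commutes with the left action of $Sp(m+1)$; in particular $\ell$ commutes with every $h\in Sp(m+1)$, so $h\gamma h^{-1}=(hgh^{-1})\,\ell$ keeps the same $L$-component $\ell$. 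Thus the conjugations used in the proof of Lemma~\ref{sp-comp}, all of which are by elements of $Sp(m+1)$, move $g$ within its $Sp(m+1)$-conjugacy class and leave $\ell$ alone, and by Lemma~\ref{conj} the conjugate is still of constant displacement $c$.

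Granting this, I would run the proof of Lemma~\ref{sp-comp} verbatim with $\gamma$ in place of the element considered there. Pick a minimizing geodesic $\sigma$ from $x_0=1H$ to $\gamma x_0$, parametrized proportionally to arc length on $[0,1]$, and put $\eta=\sigma'(0)\in\gv+\gw$, so $\|\eta\|^2=c^2$ relative to (\ref{sp-metric}). Using Lemma~\ref{conj} to replace $\gamma$ by a conjugate under $H=Sp(m)$ (which fixes $x_0$ and does not disturb $\ell$), Lemma~\ref{s-conj}, and the rank-one symmetry of $P^m(\H)=G/K$, bring $\eta$ to the normal form (\ref{pieces}): $\eta=\eta'+\eta''$ with $\eta'=a'(E_{m,m+1}-E_{m+1,m})$ and $\eta''=(a_1\mathbf{i}+a_2\mathbf{j}+a_3\mathbf{k})E_{m+1,m+1}$, so $c^2=2b'a'^2+b_1a_1^2+b_2a_2^2+b_3a_3^2$ as in (\ref{sp-disp}). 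Next conjugate $\gamma$ by a Weyl-group element $w\in Sp(m+1)$ interchanging $\mathbf{i}E_{1,1}$ with $\mathbf{i}E_{m+1,m+1}$ and fixing the remaining $\mathbf{i}E_{j,j}$; the $L$-component is still $\ell$ and the constant displacement is still $c$, so by Lemma~\ref{s-conj} the horizontal component of $\Ad(w)\eta=a'(E_{m,1}-E_{1,m})+(a_1\mathbf{i}+a_2\mathbf{j}+a_3\mathbf{k})E_{1,1}$ has length $c$. Since $m\geqq 2$, every term of $\Ad(w)\eta$ lies in $\gh=\gs\gp(m)$, so that horizontal component is $0$; exactly as in the proof of Lemma~\ref{sp-comp} this is impossible unless $\gamma$ already centralizes $Sp(m+1)$. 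Hence $g\in\{\pm I\}$, so $\gamma=g\ell$ centralizes $Sp(m+1)$, and since $\gamma\in\Gamma$ was arbitrary, $\Gamma$ centralizes $Sp(m+1)$.

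The step I expect to be the main obstacle is the justification, in cases (2) and (3) of Proposition~\ref{sp2o}, that the components of $L$ beyond the right-translation circle (or beyond the identity) are genuinely inert under $\Ad$ of $Sp(m)$ and of the Weyl elements -- that is, that one really does have $\Ad(h)\ell=\Ad(w)\ell=\ell$ in the decomposition $\gamma=g\ell$ -- together with the verification that the minimizing geodesic $\sigma$ and its horizontal lift interact with the fibration $S^{4m+3}\to P^m(\H)$ as Lemma~\ref{s-conj} requires, given that $\gamma$, unlike an element of $Sp(m+1)$, is not a left translation. A minor point is that $L=Sp(1)$ is infinite in case (1), so there is no finiteness of $\Gamma$ to fall back on; but the argument runs one $\gamma$ at a time, so this causes no trouble. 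An alternative route, which trades this bookkeeping for a totally-geodesic-fiber input, would be to deduce from the vanishing of the $\gv$-component of $\eta$ that $\gamma x_0$ lies in the fiber of $S^{4m+3}\to P^m(\H)$ through $x_0$, and then, after conjugating by $Sp(m+1)$ and using transitivity on $P^m(\H)$, that $\gamma$ induces the identity on $P^m(\H)$, again forcing $g\in\{\pm I\}$.
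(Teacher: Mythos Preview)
Your argument has a genuine gap: it proves too much. Run it with $\gamma=\ell\in L$ a nontrivial element of the right $Sp(1)$, so $g=I$ in your decomposition. Such a $\gamma$ already centralizes $Sp(m+1)$ and has constant displacement $c>0$; it moves $x_0$ along the fiber of $S^{4m+3}\to P^m(\H)$, so the minimizing geodesic $\sigma$ lies in the fiber and $\eta=\sigma'(0)\in\gw$, i.e.\ $a'=0$ in (\ref{pieces}). Your Weyl element then gives
\[
\Ad(w)\eta=(a_1\mathbf{i}+a_2\mathbf{j}+a_3\mathbf{k})E_{1,1}\in\gh,
\]
with horizontal component zero, and your steps force $c=0$. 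That is false. So the chain ``$\Ad(w)\eta\in\gh$, hence $c=0$, hence $\gamma$ centralizes'' cannot be valid for arbitrary constant displacement isometries; the invocation of Lemma~\ref{s-conj} at this point does not carry the weight you place on it. (Already for $\gamma=-I\in Sp(m+1)$ the same computation would yield $c=0$, which shows that the clause ``unless $\gamma$ already centralizes $Sp(m+1)$'' in your conclusion is doing work that the displayed calculation does not justify.) The bookkeeping you flag --- that conjugation by $Sp(m+1)$ leaves $\ell$ fixed --- is fine; the failure is earlier and more basic.

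The paper does \emph{not} attempt to push the Weyl group argument of Lemma~\ref{sp-comp} through the factor $L$. Instead it splits into the three cases of Proposition~\ref{sp2o}. For $L=Sp(1)$ (so $b_1=b_2=b_3$) it observes that the fibration $S^{4m+3}\to P^m(\H)$ satisfies the split--fibration hypotheses (\ref{new-setup}) and quotes \cite[Lemmas 5.2 and 5.3]{W2020} to conclude. For $L=O(2)\times\Z_2$ and $L=\Z_2^3$ it writes $\gamma=(g,r,t)$, computes the square displacement in terms of the $b_i$, and then uses conjugation by unit quaternions permuting $\mathbf{i},\mathbf{j},\mathbf{k}$ (and by elements of $\exp(\gv)$) to force equalities among $b_1,b_2,b_3$ that contradict the case hypothesis; this drives $\gamma$ back into $Sp(m+1)$, where Lemma~\ref{sp-comp} applies. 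Your proposal bypasses exactly the displacement comparisons that distinguish the three $L$'s, and that is where the information ``$ds^2$ is not $SU(2m+2)$--invariant'' actually enters.
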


\begin{proof}
First consider the case $L = Sp(1)$.  There $L$ is irreducible on the
subspace $\gw$ of the tangent space of $M = G/H$, so $b_1 = b_2 = b_3$
in (\ref{sp-disp}).  Thus the fiber of the projection
$M = Sp(m+1)/Sp(m) \to Sp(m+1)/[Sp(m)\times Sp(1)] = P^m(\H)$ is
the constant curvature $S^3$.  Now
the conditions (5.1) of \cite{W2020} 
valid for $M = Sp(m+1)/Sp(m) \to Sp(m+1)/[Sp(m)\times Sp(1)] = P^m(\H)$.
We quote them as follows.
\begin{equation}\label{new-setup}
\begin{aligned}
&G \text{ is a compact connected simply connected Lie group,}\\
&H \subset K \text{ are closed connected subgroups of $G$ such that } \\
& \phantom{XXXX}\text{(i) $M = G/H$\,, $M' = G/K$, and $F=H\backslash K$\,,} \\
& \phantom{XXXX}\text{(ii) $\pi: M \to M'$ is given by  $\pi(gH) = gK$\,,
                right action of $K$ on $G/H$\,, } \\
& \phantom{XXXX}\text{(iii) $M'$ and $F$ are Riemannian symmetric spaces, and}\\
& \phantom{XXXX}\text{(iv) the tangent spaces $\gm'$ for $M'$, $\gm''$
        for $F$ and $(\gm' + \gm'')$ for $M$ satisfy $\gm' \perp \gm''$}\,.
\end{aligned}
\end{equation}
The arguments of \cite[Lemmas 5.2 and 5.3]{W2020} go through 
without change to prove that $\Gamma$ centralizes $Sp(m+1)$.

Second consider the case $L = O(2)\times \Z_2$\,.  
In view of Lemma \ref{sp-comp}, either $\Gamma \cap Sp(m+1) = \{I\}$
or $\Gamma \cap Sp(m+1) = \{\pm I\}$, and we need only consider the
situation where $\Gamma \not\subset Sp(m+1)$.  

To start, let
$\gamma = (g,r,t) \in \Gamma$ where $g \in Sp(m+1)$, $r \in O(2)$ acting
on the subspace of $\gw$ spanned by $\mathbf{i}\R + \mathbf{j}\R$, 
and $t \in O(1) = \{\pm 1\}$ acting on $\mathbf{k}\R$.

Suppose first that $\det (r) = -1$.  Then $r$ is conjugate to 
$\diag\{1,-1\}$, so $\gamma^2 = (g^2,I_3) \in Sp(m+1)$, and 
$g^2 = \pm I_{m+1}$\,.
If $t = +1$ the square of the displacement is of the form 
$c_\gamma^2 = c_0^2 + b_2a^2$, and if $t = -1$ it is of the form 
$c_\gamma^2 = c_0^2 + b_2a^2 + b_3a^2$ where $a > 0$, $ds^2$ is
$b_1\kappa$ on $\mathbf{i}\R$, $b_2\kappa$ on $\mathbf{j}\R$, 
and $b_3\kappa$ on $\mathbf{k}\R$,
We can permute $\mathbf{i}\R$, $\mathbf{j}\R$ and $\mathbf{k}\R$ through
conjugation by unit quaternions, where $\gw = \Im\H$.  Then all three
$b_i$ are equal and we are in the setting of $L = Sp(1)$.
That contradiction forces $\det (r) = +1$.

We now have $\gamma = (g,r,t) \in \Gamma$ where $g \in Sp(m+1)$, 
$r = \left ( \begin{smallmatrix} \cos(\theta) & \sin{\theta}\\
-\sin(\theta) & \cos(\theta) \end{smallmatrix} \right )$ on the
plane $\mathbf{i}\R + \mathbf{j}\R$, and 
$t = \pm 1$ on $\mathbf{k}\R$.  If $t = +1$ the square displacement
$c_\gamma^2 = c_0^2 + b_1 a^2 + b_2a^2$, and if $t = -1$ it is
$c_0^2 + b_1 a^2 + b_2a^2 + b_3 a^2$\,.
Conjugation by an appropriate element of 
$\exp(\gv)$ exchanges $\mathbf{i}\R$ with an element of $\gs\gp(m)$,
sending $\gamma$ to an isometry $\gamma'$ of square displacement
$c_{\gamma'}^2 = c_0^2 + b_2a^2$ if $t = +1$, $c_0^2 + b_2a^2 + b_3a^2$
if $t = -1$.  But $c_{\gamma'}^2 = c_\gamma^2$, so $b_1a^2 = 0$ for
either $t = \pm 1$.  That contradicts $\det (r) = +1$.

We have shown that if $L = O(2)\times \Z_2$ then every
$\gamma \in \Gamma$ belongs to $Sp(m+1)$; thus $\Gamma$ centralizes $Sp(m+1)$.

Third suppose that $L = \Z_2^3 = \left ( \diag\{\pm 1,\pm 1, \pm 1\}
\right ) $.  Let $\gamma = (g,\diag\{t_1,t_2,t_3\}) \in \Gamma$.
If $t_1 = t_2 = t_3$ then the action of $\diag\{t_1,t_2,t_3\}$ on $\gw$ 
comes from $\pm I_{m+1} \in Sp(m+1)$, and
we may view $\gamma$ as an element of $Sp(m+1)$.  Now we may assume
$t_1 = t_2 = 1\,, t_3 = -1$.  Thus the square displacement of $\gamma$
is $c_\gamma^2 = c_0^2 + b_3a^2$.  Conjugation by $\mathbf{j}$ exchanges 
$\mathbf{i}\R$ and $\mathbf{k}\R$ and sends $\gamma$ to $\gamma'$
with square displacement $c_{\gamma'}^2 = c_0^2 + b_1a^2$\,, so
$b_1 = b_3$\,.  Thus $L \ne \Z_2^3$\,.  This contradiction completes the proof. 
\end{proof}

Combining Lemmas \ref{trans}, \ref{sp-comp} and \ref{spm-comp} with 
Proposition \ref{sp2o} and Theorem \ref{conj4su} we have

\begin{theorem}\label{conj4sp}
Let $ds^2$ be an $Sp(m+1)$--invariant Riemannian metric on $S^{4m+3}$,
$m \geqq 2$.  Let $\Gamma$ be a finite group of isometries of constant
displacement on $S^{4m+3}$.  Then the centralizer of $\Gamma$ in
$\mathbf{I}(S^{4m+3},ds^2)$ is transitive on $S^{4m+3}$.  In other words
the Homogeneity Conjecture is valid for $(S^{4m+3},ds^2)$.
\end{theorem}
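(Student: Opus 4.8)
The plan is to assemble the results already established, following the dichotomy provided by Proposition \ref{sp2o}: either $ds^2$ is invariant under $SU(2m+2)$, or $\mathbf{I}(S^{4m+3},ds^2) = Sp(m+1)\cdot L$ with $L$ one of $Sp(1)$, $O(2)\times\Z_2$, or $\Z_2^3$ as listed there. I would treat these two cases separately.

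If $ds^2$ is $SU(2m+2)$--invariant, I would regard $S^{4m+3}$ as $S^{2(2m+1)+1}$ equipped with an $SU((2m+1)+1)$--invariant metric. Since $m \geqq 2$ gives $2m+1 \geqq 5 \geqq 2$, Theorem \ref{conj4su} applies with its parameter set to $2m+1$, and it delivers exactly what is wanted: the centralizer of $\Gamma$ in $\mathbf{I}(S^{4m+3},ds^2)$ is transitive, so $\Gamma\backslash(S^{4m+3},ds^2)$ is homogeneous. The only point needing a glance is that the full isometry group of an $SU(2m+2)$--invariant metric on the sphere really is the group handled in Section \ref{sec4}; that is guaranteed by Proposition \ref{su2u} (applied with parameter $2m+1$).

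In the remaining case $\mathbf{I}(S^{4m+3},ds^2) = Sp(m+1)\cdot L$, note that $Sp(m+1)$ acts transitively on $S^{4m+3} = Sp(m+1)/Sp(m)$. Here Lemma \ref{spm-comp} applies --- precisely because $m \geqq 2$ and $ds^2$ is not $SU(2m+2)$--invariant --- and gives that $\Gamma$ centralizes $Sp(m+1)$ in $\mathbf{I}(S^{4m+3},ds^2)$. Hence the centralizer of $\Gamma$ contains the transitive group $Sp(m+1)$, so it is itself transitive, and $\Gamma\backslash(S^{4m+3},ds^2)$ is homogeneous. In both cases $\Gamma$ is centralized by a transitive group of isometries; together with the easy direction of the Homogeneity Conjecture recalled in Section \ref{sec1}, this proves the conjecture for $(S^{4m+3},ds^2)$.

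Since the substantive work has been carried out in Lemmas \ref{trans}, \ref{sp-comp} and \ref{spm-comp} and in Proposition \ref{sp2o}, no serious obstacle remains. The one step that calls for care is the reduction in the $SU(2m+2)$--invariant case: one must confirm that the dimension hypothesis of Theorem \ref{conj4su} is met (it is, comfortably, for $m \geqq 2$) and that nothing is lost in identifying that situation with the setting of Section \ref{sec4}.
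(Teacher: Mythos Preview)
Your proposal is correct and follows essentially the same approach as the paper, which proves the theorem in one line by ``Combining Lemmas \ref{trans}, \ref{sp-comp} and \ref{spm-comp} with Proposition \ref{sp2o} and Theorem \ref{conj4su}.'' You have simply spelled out the dichotomy from Proposition \ref{sp2o} and explained which ingredient handles each branch --- Theorem \ref{conj4su} (with parameter $2m+1 \geqq 2$) for the $SU(2m+2)$--invariant case and Lemma \ref{spm-comp} for the remaining case --- which is exactly the intended assembly.
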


\medskip
\section{{\Large {\bf $Sp(2)/Sp(1) = S^{7}$.}}}
\label{sec6}
\setcounter{equation}{0}
\medskip

The homogeneous space
$Sp(2)/Sp(1) = S^{7}$ is the case $m=1$ of $Sp(m+1)/Sp(m) = S^{4m+3}$.
Lemma \ref{trans} and Proposition \ref{sp2o} apply here, but Lemmas
\ref{sp-comp} and \ref{spm-comp} do not.  However we can salvage something
from their proofs.  
\begin{proposition}\label{sp2-comp}
Suppose that $ds^2$ is $Sp(2)$--invariant but not $SU(4)$--invariant.
If $\gamma \in Sp(2)$ has constant displacement $c > 0$
on $S^7$, then $\gamma \in L = Sp(1)$, so belongs to the centralizer
of $Sp(2)$ in $\mathbf{I}(S^7,ds^2)$.  In particular if $\Gamma \subset Sp(2)$ 
is a finite group of constant displacement isometries of $(S^7,ds^2)$
then $(S^7,ds^2)$ is homogeneous.
\end{proposition}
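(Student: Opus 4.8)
The plan is to show that $-I$ is the only element of $Sp(2)$ of constant displacement $c>0$ on $(S^7,ds^2)$. Since $-I$ is right quaternion multiplication by $-1$, it lies in $L$ and centralizes $Sp(2)$ in $\mathbf{I}(S^7,ds^2)$, so that gives the first assertion; the last assertion follows because a finite group $\Gamma\subset Sp(2)$ of constant displacement isometries then has $\Gamma\subseteq\{\pm I\}$ (any non-identity element of $\Gamma$ has constant displacement $>0$), hence $\Gamma$ is central in $\mathbf{I}(S^7,ds^2)$, the transitive subgroup $Sp(2)$ centralizes $\Gamma$, and the Riemannian quotient $\Gamma\backslash(S^7,ds^2)$ is homogeneous. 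So let $\gamma\in Sp(2)$ have constant displacement $c>0$ and suppose, for a contradiction, that $\gamma\ne\pm I$. First I would use Lemma \ref{conj} to replace $\gamma$ by an $Sp(2)$-conjugate lying in the maximal torus; after a further Weyl-group conjugation, $\gamma=\diag\{w_1,w_2\}$ with $w_1,w_2$ unit quaternions whose real parts are $\cos\phi_1,\cos\phi_2$, $0\le\phi_j\le\pi$. Because $c>0$ the isometry $\gamma$ fixes no point of $S^7$, and since $w_j=1$ would produce a fixed basis vector of $\H^2$, we get $\phi_1,\phi_2\in(0,\pi]$.

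Next I would adapt the Weyl-group step in the proof of Lemma \ref{sp-comp}. Choose a minimizing geodesic $\sigma\colon[0,1]\to S^7$ from $x_0=1H$ to $\gamma x_0$ and put $\eta=\sigma'(0)\in\gv+\gw$. For $m=1$ the isotropy action $\Ad(H)=\Ad(Sp(1))$ on $\gv\cong\H$ is left quaternion multiplication, whose orbits are spheres, so a conjugation by a suitable $\diag\{k,1\}\in H$ — which fixes $x_0$ and leaves $\gamma$ diagonal with unchanged second entry $w_2$ — puts $\eta$ in the normal form (\ref{pieces}): $\eta=\eta'+\eta''$ with $\eta'=a'(E_{1,2}-E_{2,1})$, $a'\ge0$, and $\eta''=(a_1\mathbf{i}+a_2\mathbf{j}+a_3\mathbf{k})E_{2,2}$; normalizing $b_0=1$, (\ref{sp-disp}) reads $c^2=2a'^2+b_1a_1^2+b_2a_2^2+b_3a_3^2$. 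Now I would apply Lemma \ref{s-conj} with $g=\left(\begin{smallmatrix}0&1\\1&0\end{smallmatrix}\right)\in Sp(2)$, the representative of the coordinate transposition: for $m=1$ it sends $\eta'$ to $-\eta'\in\gv$ and $\eta''$ into $\gh=\gs\gp(1)$, so the horizontal component of $\Ad(g)\eta$ is just $-\eta'$, of $ds^2$-length $\sqrt2\,|a'|$. Lemma \ref{s-conj} forces this length to equal $c$, whence $c^2=2a'^2$; comparison with the displacement formula gives $b_1a_1^2+b_2a_2^2+b_3a_3^2=0$, so $\eta''=0$ and $a'\ne0$. Thus $\sigma$ is horizontal for the Riemannian submersion $\pi\colon S^7=G/H\to G/K=P^1(\H)=S^4$.

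The crux is then an explicit quaternionic computation. Since $\eta=a'(E_{1,2}-E_{2,1})\in\ga\subset\gv$ and $G/K$ is a rank-one symmetric space, $t\mapsto\exp(t\eta)\cdot 1K$ is a geodesic of $S^4$; its horizontal lift through $x_0$ is $t\mapsto\exp(t\eta)x_0$ — this curve covers the geodesic and is horizontal because the horizontal distribution is $G$-invariant and $\eta$ is horizontal at $x_0$ — so the lift is a geodesic of $S^7$, hence equals $\sigma$ by uniqueness. But $\exp\bigl(ta'(E_{1,2}-E_{2,1})\bigr)=\left(\begin{smallmatrix}\cos(a't)&\sin(a't)\\-\sin(a't)&\cos(a't)\end{smallmatrix}\right)$, so $\sigma(t)=(\sin(a't),\cos(a't))$ has real coordinates; since $\sigma(1)=\gamma x_0=(0,w_2)$ we get $w_2=\cos a'\in\R$, so $w_2\in\{\pm1\}$, and $\phi_2\in(0,\pi]$ forces $w_2=-1$. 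Conjugating $\gamma$ by the transposition $g$ interchanges the two diagonal entries, and running the same argument again yields $w_1=-1$; thus $\gamma=-I$, contradicting $\gamma\ne\pm I$.

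I expect the only delicate point to be the second paragraph. Because $m=1$, the coordinate swap that in Lemma \ref{sp-comp} annihilated $\eta$ altogether now leaves the single surviving term $-\eta'$, and one must check carefully that this still pins down horizontality of $\sigma$ (and that $a'\ne0$) rather than collapsing to $c=0$; this is exactly why the hypotheses $m\ge2$ of Lemmas \ref{sp-comp} and \ref{spm-comp} had to be dropped here, and why Section \ref{sec6} must take advantage of the specific setting. Once horizontality is in hand, the third-paragraph computation — observing that the relevant horizontal geodesic stays in the real locus of $\H^2$ — is short.
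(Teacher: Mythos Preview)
Your argument is correct, and it takes a genuinely different route from the paper after the common first step.

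Both proofs begin by choosing a minimizing geodesic $\sigma$ from $x_0$ to $\gamma x_0$, normalizing $\eta=\sigma'(0)$ into $\ga+\gw$, and applying Lemma~\ref{s-conj} with the coordinate transposition to kill the $\gw$--component, yielding $\eta=a'(E_{1,2}-E_{2,1})$ with $a'\ne 0$. From there the two arguments diverge. The paper continues with further conjugations by the one--parameter subgroups $g_\ell=\exp\bigl(\tfrac{t}{2}\,\mathbf{i}(E_{1,2}+E_{2,1})\bigr)$ (and the $\mathbf{j},\mathbf{k}$ analogues) to force $2b'=b_1=b_2=b_3$, hence $L=Sp(1)$; it then invokes the split--fibration framework (\ref{new-setup}) and the external Lemmas~5.2--5.3 of \cite{W2020} to place $\gamma$ in $L$. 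You instead diagonalize $\gamma$ at the outset, keeping track of the second diagonal entry $w_2$ through the $H$--conjugation, and then compute the horizontal geodesic explicitly: since $\pi\colon(S^7,ds^2)\to P^1(\H)$ is a Riemannian submersion and $\eta\in\gv$, the horizontal lift $t\mapsto\exp(t\eta)x_0$ is a geodesic, hence equals $\sigma$, and its endpoint $(\sin a',\cos a')$ lies in the real slice $\R^2\subset\H^2$; matching with $\gamma x_0=(0,w_2)$ gives $w_2=\pm 1$, and fixed--point--freeness forces $w_2=-1$. The coordinate swap repeats this for $w_1$, giving $\gamma=-I$.

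What each approach buys: the paper's route yields the extra structural statement that the existence of such a $\gamma$ forces $b_1=b_2=b_3$, and it plugs into the general machinery of \cite{W2020}. Your route is entirely self--contained (no appeal to \cite{W2020}), works uniformly for all three possible $L$ in Proposition~\ref{sp2o}, and gives the sharp conclusion $\gamma=-I$ directly --- which, since $Sp(2)\cap L=\{\pm I\}$ as isometries, is exactly the statement $\gamma\in L$ restricted to $\gamma\in Sp(2)$. One small point of care in your write--up: when you ``run the same argument again'' after swapping, the $H$--conjugation needed to renormalize the new $\eta$ changes only the \emph{first} diagonal entry of the swapped $\gamma$, so the second entry is still the original $w_1$ and the conclusion $w_1=-1$ is legitimate; it would be worth making this explicit.
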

\begin{proof}
The proof is similar to the arguments of Section \ref{sec5} -- up to a point.
We use the Cartan subalgebra $\gt = \gt' + \gt''$ of $\gs\gp(2)$
where $\gt' = \mathbf{i}\R E_{1,1}$ and $\gt'' = \mathbf{i}\R E_{2,2}$.
Take 
a minimizing geodesic $\{t \mapsto \sigma(t)\}$ from $x_0 = 1H$ to
$\gamma x_0$\,, parameterized proportional to arc length, such that
$\sigma(0) = x_0$ and $\sigma(1) = \gamma x_0$\,. Then
$\sigma'(t) = \tfrac{d}{dt} \sigma(t)$, has constant length $c$ for
$0 \leqq t \leqq 1$.  Let $\eta = \sigma'(0)$. Using Lemmas \ref{conj}
and \ref{s-conj} we replace
$\gamma$ by a conjugate and assume $\eta = \eta' +  \eta''$ 
with $\eta' = a'(E_{1,2} - E_{2,1})$ and $\eta'' = a''\mathbf{i}E_{2,2}$\,,
$a'$ and $a''$ real.  Using (\ref{sp-metric}) the displacement satisfies
$c^2 = b'\kappa(\eta',\eta') + b_1\kappa(\eta_1,\eta_1)
	= 2b'a'^2 + b_1a''^2\,.$
Conjugation by $J = 
\left ( \begin{smallmatrix} 0 & 1 \\ -1 & 0 \end{smallmatrix}
\right )$ sends $\eta$ to $\zeta = a'(E_{2,1} - E_{1,2}) + 
a''\mathbf{i}E_{1,1}$\,.  The tangential component of $\zeta$ is
$a'(E_{2,1} - E_{1,2})$, and it has square length $2b'a'^2$.  Thus
$a'' = 0$, that is, $\eta = \eta' = a'(E_{1,2} - E_{2,1})$.

Conjugation by $g_1 = \exp(\frac{t}{2}\mathbf{i}(E_{1,2} + E_{2,1}))$
sends $E_{1,2} - E_{2,1}$ to $\zeta_1 = \cos(t)(E_{1,2} - E_{2,1}) + 
\sin(t)\mathbf{i}(E_{1,1} - E_{2,2})$, which has tangential component
$a'[\cos(t)(E_{1,2} - E_{2,1}) - \sin(t)\mathbf{i}E_{2,2}]$.
It has square length $2b'a'^2 = c^2 = a'^2[2b'\cos^2(t) + b_1\sin^2(t)]$, 
so $a' \ne 0$ implies $2b' = b_1$\,.
Similarly using $g_2 = \exp(\frac{t}{2}\mathbf{j}(E_{1,2} + E_{2,1}))$
and $g_3 = \exp(\frac{t}{2}\mathbf{k}(E_{1,2} + E_{2,1}))$ we have
$2b' = b_2 \text{ and } 2b' = b_3$\,.  Thus $b_1 = b_2 = b_3$\,, and so
$L = Sp(1)$.

We have just proved that (\ref{new-setup}) holds for 
$Sp(2)Sp(1) = S^7 \to P^1(\H) = Sp(2)/[Sp(1)\times Sp(1)]$.
The arguments of \cite[Lemmas 5.2 and 5.3]{W2020} go through to
show $\gamma \in L = Sp (1)$.  In particular $\gamma$ centralizes
$G = Sp(2)$.
\end{proof}

The immediate consequence, extending Theorem \ref{conj4sp} to include
the case $m=1$, is

\begin{theorem}\label{conj4sp2}
Let $ds^2$ be an $Sp(2)$--invariant Riemannian metric on $S^7$.
Let $\Gamma$ be a finite group of isometries of constant
displacement on $(S^7,ds^2)$.  Then the centralizer of $\Gamma$ in
$\mathbf{I}(S^7,ds^2)$ is transitive on $S^7$.  In other words
the Homogeneity Conjecture is valid for $(S^7,ds^2)$.
\end{theorem}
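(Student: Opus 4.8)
The plan is to split on whether $ds^2$ is invariant under $SU(4)$. If it is, then $S^7 = SU(4)/SU(3)$ carries an $SU(4)$--invariant metric --- the $m=3$ instance of the metrics treated in Section \ref{sec4} --- and Theorem \ref{conj4su} applies verbatim: the centralizer of $\Gamma$ in $\mathbf{I}(S^7,ds^2)$ is transitive, and there is nothing more to do. So from now on I would assume $ds^2$ is not $SU(4)$--invariant. By Lemma \ref{trans} and Proposition \ref{sp2o} (with $m=1$) we then have $\mathbf{I}(M,ds^2) = Sp(2)\cdot L$ with $L$ one of $Sp(1)$, $O(2)\times\Z_2$, $\Z_2^3$, and $Sp(2)$ acts transitively on $S^7$. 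Hence it suffices to show that every $\gamma\in\Gamma$ centralizes $Sp(2)$: then the transitive group $Sp(2)$ lies in the centralizer of $\Gamma$ in $\mathbf{I}(M,ds^2)$, so that centralizer is transitive and $\Gamma\backslash(S^7,ds^2)$ is homogeneous.

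The case $L = Sp(1)$ is immediate from Proposition \ref{sp2-comp}. There $b_1 = b_2 = b_3$, the fiber of $S^7\to P^1(\H)$ is a round $S^3$, and the split--fibration hypotheses (\ref{new-setup}) hold, exactly as established in the proof of Proposition \ref{sp2-comp}; the arguments of \cite[Lemmas 5.2 and 5.3]{W2020} then show that every isometry of constant displacement on $(S^7,ds^2)$, in particular every $\gamma\in\Gamma$, centralizes $Sp(2)$.

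The remaining work is the case $L\ne Sp(1)$. Write $\gamma = (g,\ell)\in\Gamma$ with $g\in Sp(2)$ and $\ell\in L$. If $\ell$ is trivial then $\gamma\in Sp(2)$, and Proposition \ref{sp2-comp} forces $\gamma$ to centralize $Sp(2)$: a non--central $\gamma\in Sp(2)$ of positive displacement would force $b_1 = b_2 = b_3$, contradicting $L\ne Sp(1)$. If $\ell$ is nontrivial I would mirror the case analysis of Lemma \ref{spm-comp} for $L = O(2)\times\Z_2$ and $L = \Z_2^3$, but with $m=1$: in the subcases where $\ell$ is an involution (a reflection in $O(2)$, a $\Z_2$--generator, or any element of $\Z_2^3$) one has $\gamma^2\in Sp(2)$ and hence $g^2 = \pm I_2$, while the square displacement $c_\gamma^2$ is $c_0^2$ plus a sum $\sum b_i a_i^2$ over the directions of $\gw$ that $\ell$ moves. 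Where Lemma \ref{spm-comp} permuted the diagonal entries $E_{j,j}$ using the Weyl group of $Sp(m+1)$ --- which requires $m\geqq2$ --- I would instead use the conjugations by $\exp(\tfrac{t}{2}\,\mathbf{e}\,(E_{1,2}+E_{2,1}))$ for $\mathbf{e}\in\{\mathbf{i},\mathbf{j},\mathbf{k}\}$, together with rotations of $\gw = \Im\H$ by unit quaternions, exactly as in the proof of Proposition \ref{sp2-comp}. Conjugating $\gamma$ by a suitable such element moves a chosen direction of $\gw$ partly into $\gh = \gs\gp(1)$ or into the base summand $\gv$, so the horizontal length of the conjugated $\eta = \sigma'(0)$ drops below $c$ unless the corresponding $a_i$ vanishes; pushing this through forces two of $b_1,b_2,b_3$ to coincide in a pattern excluded by the shape of $L$. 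That contradiction shows $\ell$ is trivial, so $\gamma\in Sp(2)$, and we are back in the case just treated.

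The main obstacle is this last case: the Weyl group of $Sp(2)$ offers only the single transposition of $E_{1,1}$ and $E_{2,2}$, so the step in Lemma \ref{spm-comp} of moving the distinguished fiber direction into $\gs\gp(m)$ is unavailable and must be replaced by the off--diagonal $\exp(\gv)$--conjugations of Proposition \ref{sp2-comp}, which mix that fiber direction with the four--dimensional $\gv$ rather than with $\gs\gp(1)$. One then has to check, in each of the finitely many subcases --- which $e_i$ the $L$--part moves, and whether the $O(2)$--part has determinant $+1$ or $-1$ when $L = O(2)\times\Z_2$ --- that the resulting horizontal--length comparison still forces the offending coefficient to vanish. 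This is the only bookkeeping--heavy part; everything else follows at once from Proposition \ref{sp2o}, Proposition \ref{sp2-comp}, Theorem \ref{conj4su}, and \cite[Lemmas 5.2 and 5.3]{W2020}.
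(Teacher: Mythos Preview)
Your proposal is correct and follows essentially the same route as the paper: split on $SU(4)$--invariance, invoke Theorem \ref{conj4su} in the invariant case, and in the non--invariant case use Proposition \ref{sp2-comp} together with the case analysis of Lemma \ref{spm-comp} on $L$. The paper simply declares Theorem \ref{conj4sp2} an ``immediate consequence'' of Proposition \ref{sp2-comp}, meaning exactly that the proof of Theorem \ref{conj4sp} goes through once Proposition \ref{sp2-comp} stands in for Lemma \ref{sp-comp}.

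One clarification that would save you the ``bookkeeping--heavy'' part: your concern about replacing the Weyl--group permutation of the $E_{j,j}$ is misplaced for the $L \ne Sp(1)$ cases. The Weyl--group step that genuinely requires $m \geqq 2$ lives in Lemma \ref{sp-comp}, and Proposition \ref{sp2-comp} already replaces that lemma. The arguments for $L = O(2)\times\Z_2$ and $L = \Z_2^3$ in Lemma \ref{spm-comp} never use the Weyl group; they rely only on conjugation by unit quaternions permuting $\mathbf{i}\R,\mathbf{j}\R,\mathbf{k}\R$ in $\gw$, and on conjugation by an element of $\exp(\gv)$ moving a $\gw$--direction into $\gs\gp(m)$. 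Both operations are available verbatim when $m=1$ (where $\gs\gp(m) = \gh$). So those two cases of Lemma \ref{spm-comp} carry over without modification, and no new off--diagonal conjugation bookkeeping is needed.
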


\medskip
\section{{\Large {\bf Summary.}}}
\label{sec7}
\setcounter{equation}{0}
\medskip

Summarizing the Theorems \ref{conj4s3}, \ref{conj4su}, \ref{conj4sp}
and \ref{conj4sp2}, 

\begin{theorem}\label{conj4}
Let $M = G/H$ be one of the spaces listed in Table \ref{obstinate-table}
above.  Let $ds^2$ be a $G$--invariant Riemannian metric on $M$, not
necessarily normal.  Let $\Gamma$ be a finite group of isometries of constant
displacement on $(M,ds^2)$.  Then the centralizer of $\Gamma$ in
$\mathbf{I}(M,ds^2)$ is transitive on $M$.  In other words
the Homogeneity Conjecture is valid for $(M,ds^2)$.
\end{theorem}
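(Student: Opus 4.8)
The plan is to assemble Theorem \ref{conj4} from the four case theorems already proved, observing that Table \ref{obstinate-table} has exactly four types of entries: $S^3 = SU(2)$ (entry (17)), $S^{2m+1} = SU(m+1)/SU(m)$ with $m\geqq 2$ (entry (15)), $S^{4m+3} = Sp(m+1)/Sp(m)$ with $m \geqq 2$ (part of entry (16)), and $S^7 = Sp(2)/Sp(1)$ (the remaining case $m=1$ of entry (16)). So the proof is a short dispatch: given $M = G/H$ from the table and a finite group $\Gamma$ of constant-displacement isometries of $(M,ds^2)$, identify which row we are in and invoke the corresponding result.

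First I would note that the hypotheses of Theorem \ref{conj4} are precisely the hypotheses of each of Theorems \ref{conj4s3}, \ref{conj4su}, \ref{conj4sp}, \ref{conj4sp2}: a $G$--invariant (not necessarily normal) metric $ds^2$ and a finite group $\Gamma$ of constant displacement isometries. Then, case by case: if $M = S^3 = SU(2)$, Theorem \ref{conj4s3} gives that the centralizer of $\Gamma$ in $\mathbf{I}(S^3,ds^2)$ is transitive on $S^3$; if $M = S^{2m+1} = SU(m+1)/SU(m)$ with $m \geqq 2$, Theorem \ref{conj4su} gives the same conclusion; if $M = S^{4m+3} = Sp(m+1)/Sp(m)$ with $m \geqq 2$, Theorem \ref{conj4sp} applies; and if $M = S^7 = Sp(2)/Sp(1)$, Theorem \ref{conj4sp2} applies. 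In every case the centralizer $Z_{\mathbf{I}(M,ds^2)}(\Gamma)$ is transitive on $M$.

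Finally I would close the loop with the standard descent argument, already recalled in Section \ref{sec1}: since $\Gamma$ is discrete and $Z := Z_{\mathbf{I}(M,ds^2)}(\Gamma)$ normalizes $\Gamma$, the image of $Z$ in $\mathbf{I}(\Gamma\backslash M, ds'^2)$ is a group of isometries of the quotient, and transitivity of $Z$ on $M$ implies transitivity of $Z/(Z\cap\Gamma)$ on $\Gamma\backslash M$; hence $\Gamma\backslash(M,ds^2)$ is homogeneous. Conversely, the easy half of the Homogeneity Conjecture (also recalled in Section \ref{sec1}, following \cite{W1960}) shows that homogeneity of $\Gamma\backslash M$ forces every $\gamma\in\Gamma$ to be of constant displacement, so the biconditional holds. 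That is exactly the assertion ``the Homogeneity Conjecture is valid for $(M,ds^2)$.''

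There is essentially no obstacle here — all the work has been done in Sections \ref{sec3}--\ref{sec6}; the only thing to be careful about is bookkeeping, namely checking that the four theorems exhaust the table. The one mild subtlety is that $Sp(2)/Sp(1)=S^7$ must be treated separately from $Sp(m+1)/Sp(m)$, $m\geqq 2$, because the Weyl-group argument in Lemma \ref{sp-comp} needs $m > 1$; but Theorem \ref{conj4sp2} was proved precisely to cover $m=1$, so the union of the four cases is complete and Theorem \ref{conj4} follows immediately.
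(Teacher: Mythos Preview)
Your proposal is correct and takes essentially the same approach as the paper: the paper's own ``proof'' of Theorem \ref{conj4} is nothing more than the sentence ``Summarizing the Theorems \ref{conj4s3}, \ref{conj4su}, \ref{conj4sp} and \ref{conj4sp2},'' placed immediately before the statement. Your version is more explicit about the bookkeeping (in particular that the $m=1$ instance of entry (15) is entry (17), and that entry (16) splits into $m\geqq 2$ and $m=1$), and your added descent paragraph is harmless but unnecessary since each of the four cited theorems already concludes with ``the Homogeneity Conjecture is valid.''
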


Combining Theorems \ref{conj-non-normal} and \ref{conj4} we have the
main result of this note:

\begin{theorem}\label{pos-curv}
Let $M = G/H$ be a connected, simply connected homogeneous space.  Suppose
that $M$ has a $G$--invariant Riemannian metric of strictly positive curvature. 
Let $ds^2$ be any $G$--invariant Riemannian metric on $M$, not necessarily the 
normal or the positively curved metric. Then the Homogeneity 
Conjecture is valid for $(M,ds^2)$.
\end{theorem}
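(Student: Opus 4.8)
The plan is to deduce Theorem \ref{pos-curv} directly from the two results already in hand, namely Theorem \ref{conj-non-normal} and Theorem \ref{conj4}. Let $M = G/H$ be connected and simply connected, admitting a $G$--invariant metric of strictly positive curvature, and let $ds^2$ be an arbitrary $G$--invariant metric on $M$. The easy half of the Homogeneity Conjecture --- that if $\Gamma\backslash(M,ds^2)$ is homogeneous then every $\gamma \in \Gamma$ is of constant displacement --- is already established in the Introduction (via \cite{W1960}), so only the converse needs attention: given a Riemannian covering $M \to \Gamma\backslash M$ with every $\gamma \in \Gamma$ of constant displacement on $(M,ds^2)$, we must show $\Gamma\backslash(M,ds^2)$ is homogeneous.

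First I would invoke the classification of homogeneous spaces admitting an invariant metric of strictly positive curvature (the Berger--Wallach--B\'erard-Bergery--Wilking list, as organized in \cite[Table 2.1]{W2020}): every such $M = G/H$ either is one of the three entries (15), (16), (17) of Table \ref{obstinate-table}, or it is not. In the latter case, Theorem \ref{conj-non-normal} applies verbatim and gives the Homogeneity Conjecture for $(M,ds^2)$. In the former case --- the three obstinate spheres $SU(m+1)/SU(m) = S^{2m+1}$, $Sp(m+1)/Sp(m) = S^{4m+3}$, and $SU(2) = S^3$ --- Theorem \ref{conj4} applies and in fact gives something slightly stronger: the centralizer of $\Gamma$ in $\mathbf{I}(M,ds^2)$ is already transitive on $M$, which forces $\Gamma\backslash(M,ds^2)$ to be homogeneous (the centralizer descends to a transitive isometry group on the quotient). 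In either case the conjecture holds, so the two results together exhaust all possibilities.

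The only genuinely substantive point to make explicit is that the trichotomy above is exhaustive, i.e.\ that every homogeneous space carrying a positively curved invariant metric really does appear in \cite[Table 2.1]{W2020}; this is a citation to the classification literature rather than something to be reproved here. There is no real obstacle: both halves of the argument are already packaged as theorems in the excerpt, and the proof of Theorem \ref{pos-curv} is essentially a one-line combination. If I were writing it out, I would simply say: let $M = G/H$ be as in the statement; if $M$ is not one of (15), (16), (17) of Table \ref{obstinate-table}, apply Theorem \ref{conj-non-normal}; if it is, apply Theorem \ref{conj4}; in both cases the Homogeneity Conjecture holds for $(M,ds^2)$, and since these two cases exhaust the classification of positively curved homogeneous spaces, the theorem follows. \qed
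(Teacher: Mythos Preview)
Your proposal is correct and matches the paper's own proof exactly: the paper simply states that Theorem \ref{pos-curv} follows by combining Theorems \ref{conj-non-normal} and \ref{conj4}, which is precisely the case split you carry out.
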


\end{document}